\documentclass[12pt, twoside, a4paper, fleqn]{article}

\NeedsTeXFormat{LaTeX2e}

\usepackage{latexsym}
\usepackage{amscd}
\usepackage{theorem}
\usepackage{pifont}
\usepackage{mathbbol}
\usepackage{amsfonts}
\usepackage{xspace}
\usepackage{amssymb}
\usepackage{fancyhdr}
\usepackage{mathrsfs}
\usepackage{amsmath}%
\usepackage{graphics}%
\usepackage{graphicx}%
\usepackage{euscript}%
\usepackage{psfrag}%
\usepackage{empheq}%
\usepackage{upgreek}
\usepackage{eufrak}

\DeclareMathAlphabet{\mathpzc}{OT1}{pzc}{m}{it}

{\theorembodyfont{\slshape} \newtheorem{theorem}{Theorem}[section]}
{\theorembodyfont{\slshape} }
{\theorembodyfont{\slshape} }
{\theorembodyfont{\slshape} \newtheorem{corollary}[theorem]{Corollary}}

\newenvironment{proof}{\noindent\textbf{Proof:\ }}{$\hfill{\bullet}$}

\topmargin=-.5in
\textheight=9.2in
\textwidth=6.5in
\oddsidemargin=-.2cm
\evensidemargin=-.2cm

\numberwithin{equation}{section}

\pagestyle{fancy}

\pagestyle{fancy}

\rhead{\bfseries{Lyapunov exponents}}
\chead{}
\lhead{\bfseries{S. Sridharan, A.R. Tiwari}}
\cfoot{}
\fancyfoot[RO,LE]{Page \thepage}
\bigskip

\title{The dependence of Lyapunov exponents of polynomials on its coefficients}
\bigskip
\bigskip

\author{{\sc Shrihari Sridharan\footnote{This author was supported by a Fasttrack Grant for Young Scientists awarded by the Department of Science and Technology, Government of India, vide SR/FTP/MS-008/2012.},\ \ \ \ \ \ Atma Ram Tiwari} \\ Indian Institute of Science Education and Research \\ Thiruvananthapuram (IISER-TVM) \\ {\tt shrihari@iisertvm.ac.in},\ \ \ \ \ \ {\tt artiwari15@iisertvm.ac.in}}
\bigskip

\date{\today}
\bigskip 

\begin{document}

\maketitle

\begin{abstract}
\noindent 
In this paper, we consider the family of hyperbolic quadratic polynomials parametrised by a complex constant; namely $P_{c} (z) = z^{2} + c$ with $|c| < 1$ and the family of hyperbolic cubic polynomials parametrised by two complex constants; namely $P_{(a_{1}, a_{0})} (z) = z^{3} + a_{1} z + a_{0}$ with $|a_{i}| < 1$, restricted on their respective Julia sets. We compute the Lyapunov characteristic exponents for  these polynomial maps over corresponding Julia sets, with respect to various Bernoulli measures and obtain results pertaining to the dependence of the behaviour of these  exponents on the parameters describing the polynomial map. We achieve this using the theory of thermodynamic formalism, the pressure function in particular. 
\end{abstract}
\bigskip

\begin{tabular}{l c l}
\textbf{Keywords} & : & Lyapunov exponents, \\ & & Structural stability of systems, \\ & & Hausdorff dimension. \\ \\
\textbf{AMS Subject Classifications} & : & 37B25, 37F10, 37F15.
\end{tabular}
\bigskip  

\thispagestyle{empty}

\section{Introduction}
The study of Lyapunov  exponents plays a significant role in characterising the instability of orbits, the growth of trajectories, the sensitive dependence of the system on parametric disturbances in discrete and continuous dynamics, \textit{etc}. Ideas more than half-a-century old by Benettin \textit{et al} \cite{Benettin} regarding the relation between Lyapunov exponents and exponential divergence of typical trajectories have been used by a broad spectrum of mathematicians and physicists to understand  various dynamical systems. 
\medskip 

\noindent 
Pesin furthered this study in \cite{Pesin} to calculate the link between the Lyapunov exponents and the degree of stochasticity for typical trajectories and its relation to other measures of randomness like the Kolmogorov entropy.  A vast literature is available in this area, authored by several luminaries in the field on various types of dynamical systems, like polynomial or rational maps restricted on their Julia sets, Axiom A diffeomorphisms, Anosov flows,  \textit{etc}. A particularly desirable feature of all such maps is that they could be studied through a symbolic model, a property  that we thoroughly exploit in the computations in this paper.
\medskip 

\noindent 
Here, we focus on monic, centered, hyperbolic, quadratic and cubic polynomial maps $P$ with complex coefficients of modulus strictly less than one except the leading coefficient, restricted on its Julia set, $\mathcal{J}_{P}$, \textit{i.e.}, $P_{c} (z) = z^{2} + c$ and $P_{\mathbf{a}} (z) = z^{3} + a_{1} z + a_{0}$ with the coefficients $c$ and $\mathbf{a} = (a_{1}, a_{0})$ satisfying the other technical necessities, as mentioned above. Observe that our prescription insists that the parameter $c$ in the quadratic polynomial comes from $\mathbb{M}$, the \textit{Mandelbrot set} (see \cite{Beardon, Garijo, Lyubich} for more details) with $|c| < 1$. 
\medskip 

\noindent 
Manning proved in \cite{Manning} that the Lyapunov exponent of the quadratic polynomial map restricted on its Julia set with respect to the Lyubich's measure is constant. In fact, he deduces in that paper that the equilibrium distribution on the Julia set has Hausdorff dimension one. Sridharan proved in \cite{Sridharan} that there exists an infinitude of probability measures supported on the Julia set associated to various Bernoulli measures where Manning's results do not generalise, albeit for a very small class of quadratic polynomial maps, \textit{i.e.}, when $c \in [0, \frac{1}{4})$. Although not mentioned explicitly, the work of Sridharan in \cite{Sridharan} asserts a non-constant and a non-linear behaviour of the appropriate Lyapunov exponent (considered upto a suitable order) when the hyperbolic quadratic family of polynomials undergoes small but only real and positive perturbations in the above mentioned interval. 
\medskip 

\noindent 
Our aim in this paper is to calculate the Lyapunov exponent for a wide range of quadratic and cubic polynomial maps. Since the case considered under study in \cite{Sridharan} already establishes a non-linear behaviour of the Lyapunov exponent for the quadratic family of polynomials under small real positive perturbations, it makes no case for us to extend these results. However, we compute the Lyapunov exponent for a wider range of quadratic polynomials and cubic polynomials and obtain the values of the Lyapunov exponents computed in \cite{Sridharan}, as a corollary when $c \in [0, \frac{1}{4})$. A careful reader may observe that even though the computations of the necessary integrals may look similar to the ones in \cite{Sridharan}, we accommodate $c$ from a larger set and hence, the method of computations are sufficiently different. Further, once we compute the value of the Lyapunov exponent in the quadratic case, it gives us motivation to compute the quantity for cubic polynomials. 
\medskip 

\noindent 
This paper is organized thus. In the next section namely section \eqref{setting}, we write all necessary definitions that would lead us up to make a mathematically meaningful statement of the main results of this paper for the considered families of quadratic and cubic polynomials, as explained in the previous paragraph. In section \eqref{pressure}, we state necessary results from the literature that we use in the sequel. In the section \eqref{computation}, we compute the value of the Lyapunov exponents with respect to the appropriate measures of quadratic polynomials and cubic polynomials and prove the main results of this paper. We conclude the paper with a few observations regarding the so computed derivatives of the Lyapunov exponents, in section \eqref{concl}. 

\section{Basic settings and the Main results} 
\label{setting}

\noindent 
Let $\widehat{\mathbb{C}}$ denote the complex sphere, meaning the complex plane along with the  point at $\infty$. Let $P:\widehat{\mathbb{C}} \longrightarrow \widehat{\mathbb{C}}$ be a polynomial map of degree $d$: 
\[ P(z)\ \ :=\ \ b_{d} z^{d} + b_{d - 1} z^{d - 1} + \cdots + b_{1} z + b_{0}, \] 
where $b_{i} \in \mathbb{C},\ \forall i = 0, 1, \cdots, d$, with $b_{d} \neq 0$. By a set of affine transformations (and a slight abuse of notations), one can write the polynomial map $P$ in one of its normal forms (also referred to as $P$); monic and centered:
\[ P_{(a_{d - 2}, a_{d - 3}, \cdots, a_{1}, a_{0})} (z)\ \ =\ \ z^{d} + a_{d - 2} z^{d - 2} + \cdots + a_{1} z + a_{0}, \]
where $a_{i} \in \mathbb{C},\ \forall i = 0, 1, \cdots, d - 2$. Thus, we shall consider quadratic and cubic polynomials to look like, 
\begin{equation} 
\label{normalform}
P_{c} (z)\ \ =\ \ z^{2} + c\ \ \ \ \text{and}\ \ \ \  P_{(a_{1}, a_{0})} (z)\ \ =\ \ P_{\mathbf{a}} (z)\ \ =\ \ z^{3} + a_{1} z + a_{0}. 
\end{equation} 
One of the several possible definitions of \emph{Julia set} of a polynomial map $P$ states that it is the closure of the set of repelling periodic points of $P$, \textit{i.e.},
\[\mathcal{J}_{P}\ \ :=\ \ \overline{\left\{ z_{0} \in \widehat{\mathbb{C}} : P^{n} (z_{0}) = z_{0}\ \text{and}\ |(P^{n})' (z_{0})| > 1 \right\}}. \] 
It is then easy to observe that the Julia set $\mathcal{J}_{P}$ is a non-empty, compact, completely $P$-invariant metric space, where the family of iterates $\left\{P^{n}\right\}_{n \geq 0}$ is not normal (in the sense of Montel). For more properties of the Julia set of a polynomial map, interested readers are referred to \cite{Beardon, Lyubich}. 
\medskip 

\noindent 
In this paper, we will be interested in \emph{hyperbolic} Julia sets, \textit{i.e.}, $\exists$ constants $C > 0$ and $\lambda > 1$ such that $\inf_{z \in \mathcal{J}_{P}} \left\{ | P^{n} (z) | \right\} \ge C \lambda^{n},\ \forall n \geq 1$. Observe that a hyperbolic Julia set $\mathcal{J}_{P}$ is topologically connected. Further, this condition is realised on the quadratic family (when written in its normal form, as in \eqref{normalform}) when $c \in \mathbb{M}$, the Mandelbrot set that does not allow the critical orbit to approach $\infty$. In the case of the cubic polynomial (in its normal form as in \eqref{normalform}), the critical point is determined by the parameter $a_{1}$, while the critical orbit is determined by the parameters $a_{1}$ and $a_{0}$. We require that this critical orbit remains bounded.  We should  further need a technical condition that all the parameters of the quadratic and cubic polynomial $c$ and $\mathbf{a}$ respectively satisfy $|c| < 1$ and $|a_{i}| < 1$, for $i = 0, 1$. This technical condition becomes essential in our computations, later, as one may observe in section \eqref{computation}. 
\medskip 

\noindent 
We denote by $\mathcal{Q}$ and $\mathcal{Q}'$, the collection of all quadratic and cubic polynomials respectively with hyperbolic Julia sets, that we are interested in, as explained in the above paragraph, \textit{i.e.},
\begin{eqnarray} 
\mathcal{Q} & := & \left\{ P_{c} (z)\ :=\ z^{2} + c\ :\ \mathcal{J}_{P_{c}} \equiv \mathcal{J}_{c}\ \text{is hyperbolic and}\ |c| < 1 \right\}; \\ 
\mathcal{Q}' & := & \bigg\{ P_{\mathbf{a}} (z)\ :=\ z^{3} + a_{1}z + a_{0}\ :\ \mathcal{J}_{P_{\mathbf{a}}} \equiv \mathcal{J}_{\mathbf{a}}\ \text{is hyperbolic} \nonumber \\ & & \ \ \ \ \ \ \ \ \ \ \ \ \ \ \ \ \ \ \ \ \ \ \ \ \ \ \ \ \ \ \ \ \ \ \ \ \ \ \ \ \ \ \ \ \ \ \ \ \ \ \ \ \ \text{and}\ |a_{i}| < 1\ \text{for}\ i = 0, 1 \bigg\}. 
\end{eqnarray} 
For any arbitrary polynomial $P$, owing to the density of preimages of any generic point  $\zeta \in \mathcal{J}_{P}$, it can be observed that the sequence of measures 
\[ \mu_{n}^{\zeta}\ \ :=\ \ \frac{1}{d^{n}} \sum_{P^{n} (w) = \zeta} \delta_{w},\ \text{where}\ \delta_{w}\ \text{is the Dirac delta measure at the point}\ w,\]
converges to some measure $\mu$ called the \emph{Lyubich's measure}, independent of $\zeta$, in the weak*-topology. For example, suppose all the coefficients (except the leading one) are identically zero, then the quadratic polynomial looks like $P_{0}(z) = z^{2}$ while the cubic polynomial looks like $P_{\mathbf{0}} (z) = z^{3}$; both of which has the unit circle, $\mathit{S}^{1}$ in $\mathbb{C}$ as its Julia set, and the Lyubich's measure can be thought of as the \emph{Haar measure} on $\mathit{S}^{1}$. The support of the Lyubich's measure is the Julia set. 
\medskip 

\noindent 
The \emph{Lyapunov exponent} of a polynomial map $P$ restricted on its Julia set $\mathcal{J}_P$ with respect to any probability  measure $\mu$ supported on $\mathcal{J}_P$ is defined as:
\begin{equation} 
\label{L}
\mathcal{L}_{\mu} (P)\ \ :=\ \ - \int_{\mathcal{J}_P} \log |P'| d \mu.
\end{equation}
In order to calculate the Lyapunov exponent of the family of quadratic polynomial maps $P_{c} \in \mathcal{Q}$ restricted on its Julia set $\mathcal{J}_{c}$, we make use of the shift space and the shift map on $2$ symbols, whereas to calculate the Lyapunov exponent of the family of cubic polynomial maps $P_{\mathbf{a}} \in \mathcal{Q}'$ restricted on its Julia set $\mathcal{J}_{\mathbf{a}}$, we make use of the shift space and the shift map on $3$ symbols. We now briefly narrate the shift space and the shift map on $ d $ symbols. 
\medskip 

\noindent 
Let $\Sigma_{d}^{+}\ :=\ \left\{ 1, 2, 3, \cdots, d \right\}^{\mathbb{Z}^{+}}$, \textit{i.e.}, the set of all infinite sequences of  $\{ 1, 2, 3, \cdots, d \}$ indexed by the positive integers while $\sigma : \Sigma_{d}^{+} \longrightarrow \Sigma_{d}^{+}$ is the shift map defined by $(\sigma \underline{x})_{n}\ =\ x_{n + 1}$. Then we know by a theorem due to Lyubich in \cite{Lyubich} that whenever the polynomial $P$ of degree $d$ is hyperbolic, there exists  conjugacies $\Psi$ from $\Sigma_{d}^{+}$ to $\mathit{S}^1$ and $\Phi_{P}$ from $\mathit{S}^1$ to $\mathcal{J}_{P}$ such that
\[ \begin{CD}
\Sigma_{d}^{+} @> \Psi >> \mathit{S}^1 @> \Phi_{P} >> \mathcal{J}_{P} \\
@V \sigma VV @VRVV @VPVV \\
\Sigma_{d}^{+} @> \Psi >> \mathit{S}^1 @> \Phi_{P} >> \mathcal{J}_{P}
\end{CD} \]
where $R(z) = z^{d}$ that has the unit circle as its Julia set. In other words, this means that by virtue of the conjugacies $\Psi$ and $\Phi_{P}$, we have that 
\begin{equation} 
\label{Conjugacy}
P \circ \Phi_{P} \circ \Psi\ \ =\ \ \Phi_{P} \circ R \circ \Psi\ \ =\ \ \Phi_{P} \circ \Psi \circ \sigma.
\end{equation}
We will write more about the second part of the commuting diagram, later in section \eqref{computation}, as theorems \eqref{Carleson,gamelin, Zinsmeister} and \eqref{Carleson,gamelin3}. It is well-known that on the shift space, the clopen cylinder sets form a semi-algebra that generates the Borel $\sigma$-algebra. Making use of any positive probability vector $p = \left( p_{1}, p_{2}, \cdots, p_{d} \right)$ with $p_{i} > 0,\ \forall i = 1, 2, \cdots, d$ and $\sum_{i = 1}^{d} p_{i} = 1$, we define the \emph{Bernoulli measure} $\tilde{\mu}_{\left( p_{1}, p_{2}, \cdots, p_{d} \right)}$ on the cylinder sets of $\Sigma_{d}^{+}$ as
\begin{equation}
\tilde{\mu}_{\left( p_{1}, p_{2}, \cdots, p_{d} \right)} \left( \left[ x_{k}, x_{k + 1}, \cdots, x_{l} \right] \right)\ \ :=\ \ \bigg( p_{1}^{\# \{ x_{i} = 1 \}} \bigg) \times \bigg( p_{2}^{\# \{ x_{i} = 2 \}} \bigg) \times \cdots \times \bigg( p_{d}^{\# \{x_{i} = d \}} \bigg), 
\end{equation}
for $k \le i \le l$. Observe that the equidistributed Bernoulli measure on two symbols has $p_{i} = \frac{1}{2}$, for $i = 1, 2$ and the equidistributed Bernoulli measure on three symbols has $p_{i} = \frac{1}{3}$, for $i = 1, 2, 3$. The equidistributed Bernoulli measure on $\Sigma_{2}^{+}$ and $\Sigma_{3}^{+}$ is nothing but the Lyubich's  measure $\mu$ supported on the respective Julia set $\mathcal{J}_{c}$ and $\mathcal{J}_{\mathbf{a}}$; by virtue of the conjugacy in \eqref{Conjugacy}. Further, we denote by $\mu_{(p_{1}, p_{2})}$ and $\mu_{(p_{1}, p_{2}, p_{3})}$ the probability measures defined on $\mathcal{J}_{c}$ and $\mathcal{J}_{\mathbf{a}}$ respectively, associated to the Bernoulli measure $\tilde{\mu}_{(p_{1}, p_{2})}$ and $\tilde{\mu}_{(p_{1}, p_{2}, p_{3})}$ via the conjugacy mentioned above.
\medskip 

\noindent 
We urge the reader to observe that when all $ p_{i} $s are not equidistributed in the Bernoulli measure, it results in some of the pre-image branches in the definition of Lyubich's measure gaining prominence over the other branches. However, since the set of pre-image branches is uniformly distributed in the Julia set, $\mathcal{J}$, for any generic point $z \in \mathcal{J}$, this only means that as $n$ increases the sections of the Julia set, $\mathcal{J}$, that gain prominence get tinier and tinier. This family of non-equidistributed Bernoulli measures is interesting to work with, especially when one of the $p_{i}$'s is extremely close to $1$, leaving the remainder of the  $p_{j}$'s to be arbitrarily close to $0$. In such a case the section of the Julia set corresponding to the preimage branch that gains prominence in the Julia set (with respect to this Bernoulli measure), eventually reduces to a point measure. In the following theorems, we compute the Lyapunov exponent of the polynomials with respect to the equidistributed Bernoulli measure and this interesting case of the Bernoulli measure that could eventually reduce to being a point measure. We now state the main results of this paper. 
\medskip 

\noindent 
\begin{theorem}  
\label{theorem1}
Let $P_{c} (z) = z^{2} + c \in \mathcal{Q}$. Then, the Lyapunov exponent of $P_{c}$ restricted on its Julia set $\mathcal{J}_{c}$ with respect to the measure $\mu_{(p_{1}, p_{2})}$ associated to the Bernoulli measure $\tilde{\mu}_{(p_{1}, p_{2})}$ is given by 
\begin{eqnarray*}
\mathcal{L}_{\mu_{(p_{1}, p_{2})}} (P_{c})  \begin{cases}
= - \log 2 \quad & \text{if} \quad p_{1} = p_{2} = \frac{1}{2}, \\ 
\to - \log 2 + c_{ \mathbb{R} } + \frac{3}{2} c_{ \mathbb{R} }^{2} - \frac{3}{2} c_{ \mathbb{I} }^{2}  \quad & \text{as} \quad p_{1} \uparrow 1\ \text{or}\ p_{2} \uparrow 1;
\end{cases}
\end{eqnarray*}
where $ c = c_{\mathbb{R}} + i c_{\mathbb{I}} $ with $ c_{\mathbb{R}}, c_{\mathbb{I}} \in \mathbb{R} $. 
\end{theorem}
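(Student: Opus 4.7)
The plan is to reduce $\mathcal{L}_{\mu_{(p_{1}, p_{2})}}(P_{c})$ to an integral of $\log|z|$ against the pushed-forward Bernoulli measure on $\mathcal{J}_{c}$ and to treat the two regimes separately via the conjugacy \eqref{Conjugacy}. Since $P_{c}'(z) = 2z$, the integrand in \eqref{L} splits as $\log 2 + \log|z|$, so
\[
\mathcal{L}_{\mu_{(p_{1}, p_{2})}}(P_{c})\ \ =\ \ -\log 2\ -\ \int_{\mathcal{J}_{c}} \log|z|\, d\mu_{(p_{1}, p_{2})}(z),
\]
and the entire task is to compute the residual integral in each of the two regimes.

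In the equidistributed case $p_{1} = p_{2} = 1/2$, the measure $\mu_{(1/2,1/2)}$ is precisely the Lyubich measure on $\mathcal{J}_{c}$, and I would invoke the theorem of Manning \cite{Manning} that the Lyapunov exponent of a hyperbolic quadratic polynomial with respect to its Lyubich measure equals $\log 2$; in the sign convention of \eqref{L} this immediately yields $\mathcal{L}_{\mu_{(1/2, 1/2)}}(P_{c}) = -\log 2$. Alternatively, one may observe that the Haar measure on $S^{1}$ pushes forward by the Bottcher coordinate $\Phi_{P_{c}}(w) = w + O(1/w)$ to Lyubich's measure on $\mathcal{J}_{c}$, and then apply Jensen's formula on the exterior disk, on which $\Phi_{P_{c}}$ is non-vanishing since the origin lies in the filled Julia set whenever $c \in \mathbb{M}$, to obtain $\int_{S^{1}} \log|\Phi_{P_{c}}|\, d\theta = 0$.

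For the limit $p_{1} \uparrow 1$, the case $p_{2} \uparrow 1$ being completely analogous, I would observe that $\tilde{\mu}_{(p_{1}, p_{2})}$ converges weakly to the Dirac mass at the fixed sequence $\overline{1} = (1, 1, 1, \ldots) \in \Sigma_{2}^{+}$. Under $\Psi$ this maps to the unique fixed point $w = 1 \in S^{1}$ of $R(z) = z^{2}$, and under $\Phi_{P_{c}}$ to the repelling fixed point $\alpha_{+} = (1 + \sqrt{1 - 4c})/2$ of $P_{c}$ on $\mathcal{J}_{c}$; the sibling fixed point $\alpha_{-}$ is attracting and hence lies in the Fatou set throughout the hyperbolic range. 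Continuity of $\Phi_{P_{c}} \circ \Psi$ at $\overline{1}$ then gives
\[
\lim_{p_{1} \uparrow 1} \int_{\mathcal{J}_{c}} \log|z|\, d\mu_{(p_{1}, p_{2})}(z)\ \ =\ \ \log|\alpha_{+}|.
\]
A short expansion via the binomial series $\sqrt{1 - 4c} = 1 - 2c - 2c^{2} + O(|c|^{3})$ produces $\log(2\alpha_{+}) = \log 2 - c - \frac{3}{2}c^{2} + O(|c|^{3})$, and taking real parts delivers the stated expression for $-\log|2\alpha_{+}|$ up to terms of order $|c|^{3}$.

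The main obstacle I anticipate is making the semiconjugacy argument precise at the two constant sequences: both $\overline{1}$ and $\overline{2}$ must descend under $\Psi$ to the single fixed point $w = 1 \in S^{1}$ in order that the limits $p_{1} \uparrow 1$ and $p_{2} \uparrow 1$ agree, and this identification requires care since it is exactly at such dyadic-rational points that $\Psi$ fails to be a homeomorphism. A secondary technical point is verifying throughout the range $|c| < 1$ that $\alpha_{+}$ is the fixed point lying on $\mathcal{J}_{c}$, which reduces to the bound $|2\alpha_{+}| > 1$ together with the observation that the other root $\alpha_{-}$ remains attracting on the hyperbolic components in question.
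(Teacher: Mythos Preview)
Your argument is correct and takes a genuinely different route from the paper. The paper expands the conjugacy $\Phi_{c}(z) = z + \sum_{n\ge 1}\varphi_{n}(z)c^{n}$, solves for $\varphi_{1},\varphi_{2}$ from the functional equation, Taylor-expands $\log|\Phi_{c}|$ to second order in $c$, and then integrates each resulting term over $S^{1}$ against the Bernoulli measure, evaluating those integrals separately at $p_{1}=p_{2}=\tfrac12$ and in the limit $p_{i}\uparrow 1$. You instead bypass the $\varphi_{n}$ entirely: for $p_{1}=p_{2}=\tfrac12$ you invoke Manning (or the harmonic-mean argument for $\log|\Phi_{c}/w|$ on $\{|w|>1\}$), and for $p_{i}\uparrow 1$ you take the weak limit $\tilde\mu_{(p_{1},p_{2})}\to\delta_{\overline{i}}$ first, push it through $\Phi_{c}\circ\Psi$ to the repelling fixed point $\alpha_{+}$, and only then expand $-\log|2\alpha_{+}|$ in $c$. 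Your approach is cleaner and in fact yields the \emph{exact} limiting value $-\log|2\alpha_{+}|$, of which the theorem's expression is just the second-order truncation. The paper's approach, on the other hand, keeps the full Bernoulli parameter $p_{1}$ in play throughout, which is what they need later (Section~\ref{concl}) to argue that the second $c$-derivative of $\mathcal{L}$ is a nonconstant analytic function of $p_{1}$.

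Two small comments on the obstacles you flag. Your identification $\Psi(\overline{1})=\Psi(\overline{2})=1$ is exactly right: the only fixed point of $z\mapsto z^{2}$ on $S^{1}$ is $z=1$, and both constant sequences are $\sigma$-fixed, so both must land there; this is precisely the dyadic non-injectivity of $\Psi$ and it is what forces the two limits $p_{1}\uparrow 1$ and $p_{2}\uparrow 1$ to agree. As for identifying $\Phi_{c}(1)=\alpha_{+}$, your criterion ``$\alpha_{-}$ is attracting'' only holds on the main cardioid, whereas $|c|<1$ meets other hyperbolic components (e.g.\ near $c=-1$). The cleaner justification is that $\Phi_{c}(1)$ is the landing point of the external ray at angle $0$ (the $\beta$-fixed point), and this depends analytically on $c$ throughout $\mathbb{M}$; since it equals $\alpha_{+}$ at $c=0$ and the two roots $\alpha_{\pm}$ never collide for $c\neq\tfrac14$, one has $\Phi_{c}(1)=\alpha_{+}$ on the whole parameter range.
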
 
\medskip 

\noindent 
\begin{theorem} 
\label{theorem2}
Let $ P_{\mathbf{a}} (z) = z^{3} + a_{1} z + a_{0} \in \mathcal{Q}'$. Then, the Lyapunov exponent of $ P_{\mathbf{a}} $ restricted on its Julia set $ \mathcal{J}_{\mathbf{a}} $ with respect to the measure $ \mu_{(p_{1}, p_{2}, p_{3})} $ associated to the Bernoulli measure $ \tilde{\mu}_{(p_{1}, p_{2}, p_{3})} $ is given by 
\begin{eqnarray*}
\mathcal{L}_{\mu_{(p_{1}, p_{2}, p_{3})}} (P_\mathbf{a})  \begin{cases}
= & - \log 3 \quad \text{if} \quad p_{1} = p_{2} = p_{3}, \\ 
\to & - \log 3 + \frac{1}{2} a_{ 1, \mathbb{R} } + \frac{1}{2} a_{ 0, \mathbb{R} } + \frac{3}{4} a_{ 1, \mathbb{R} } a_{ 0, \mathbb{R} } - \frac{3}{4} a_{ 1, \mathbb{I} } a_{ 0, \mathbb{I} } + \frac{1}{4} a_{ 1, \mathbb{R} }^{2} \\
& - \frac{1}{4} a_{ 1, \mathbb{I} }^{2} + \frac{1}{2} a_{ 0, \mathbb{R} }^{2} - \frac{1}{2} a_{ 0, \mathbb{I} }^{2} + \frac{15}{16} a_{ 1, \mathbb{R} }^{2}a_{ 0, \mathbb{R} } - \frac{15}{16} a_{ 1, \mathbb{I} }^{2}a_{ 0, \mathbb{R} } + \frac{3}{2}a_{ 1, \mathbb{R}}a_{ 0, \mathbb{R} }^{2} \\ 
& -  \frac{3}{2}a_{ 1, \mathbb{R} }a_{ 0, \mathbb{I} }^{2} -  \frac{15}{8}a_{ 1, \mathbb{R} }a_{ 1, \mathbb{I} }a_{ 0, \mathbb{I} } -3a_{ 1, \mathbb{I} }a_{ 0, \mathbb{R} }a_{ 0, \mathbb{I} } - 12a_{ 1, \mathbb{R} }a_{ 1, \mathbb{I} }a_{ 0, \mathbb{R} }a_{ 0, \mathbb{I} } \\ & + 3a_{ 1, \mathbb{R} }^{2} a_{ 0, \mathbb{R} }^{2} -3a_{ 1, \mathbb{R} }^{2} a_{ 0, \mathbb{I} }^{2} - 3a_{ 1, \mathbb{I} }^{2} a_{ 0, \mathbb{R} }^{2} + 3a_{ 1, \mathbb{I} }^{2} a_{ 0, \mathbb{I} }^{2} \quad \text{as} \quad p_{i} \uparrow 1;
\end{cases}
\end{eqnarray*}
where $ a_{j} = a_{j, \mathbb{R}} + i a_{j, \mathbb{I}} $ for $ j = 1, 0 $ with 
$ a_{j, \mathbb{R}}, a_{j, \mathbb{I}} \in \mathbb{R} $.
\end{theorem}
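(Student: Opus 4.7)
The plan is to transfer the Lyapunov-exponent integral from $\mathcal{J}_{\mathbf{a}}$ to the symbolic space $\Sigma_{3}^{+}$ via the conjugacy $\Phi_{\mathbf{a}} \circ \Psi$, and then evaluate it perturbatively in $\mathbf{a} = (a_1, a_0)$ about $\mathbf{a} = \mathbf{0}$. Writing
\begin{equation*}
\mathcal{L}_{\mu_{(p_1,p_2,p_3)}}(P_{\mathbf{a}})\ \ =\ \ -\int_{\Sigma_{3}^{+}} \log\bigl|3\,\Phi_{\mathbf{a}}(\Psi(\underline{x}))^{2} + a_1\bigr|\, d\tilde{\mu}_{(p_1,p_2,p_3)}(\underline{x}),
\end{equation*}
I would first develop a Taylor expansion of $\Phi_{\mathbf{a}}$ around $\Phi_{\mathbf{0}} = \mathrm{id}$ using the B\"ottcher functional equation $P_{\mathbf{a}}(\Phi_{\mathbf{a}}(w)) = \Phi_{\mathbf{a}}(w^{3})$. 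Writing $\Phi_{\mathbf{a}}(w) - w$ as a formal series in $a_1, a_0$ whose coefficients are Laurent series in $w^{-1}$ (a move justified by hyperbolicity and the holomorphic parameter dependence of the B\"ottcher coordinate), each order in $\mathbf{a}$ produces a linear functional equation that can be solved explicitly; for example, at first order one obtains $3w^{2}\alpha(w) + w = \alpha(w^{3})$ and $3w^{2}\beta(w) + 1 = \beta(w^{3})$ for the $a_1$- and $a_0$-linear corrections, and so on at higher orders.

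Substituting this expansion into the integrand and grouping terms by order in $\mathbf{a}$ produces a series whose coefficients are finite sums of characters $w \mapsto w^{k}$ on $S^{1}$. For the equidistributed case $p_1 = p_2 = p_3 = 1/3$, the pushforward $\Psi_{*}\tilde{\mu}$ is the Haar measure on $S^{1}$, so only the zero Fourier mode contributes; combined with the $R$-invariance of Haar and Manning's theorem on constancy of the Lyapunov exponent with respect to Lyubich's measure, this recovers $-\log 3$ exactly.

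For the limit $p_i \uparrow 1$, the Bernoulli measure $\tilde{\mu}$ concentrates on the constant sequence $(i,i,i,\ldots)\in\Sigma_{3}^{+}$, and its image under $\Phi_{\mathbf{a}} \circ \Psi$ is a fixed point $z_{*}(\mathbf{a})$ of $P_{\mathbf{a}}$ on $\mathcal{J}_{\mathbf{a}}$, namely an appropriate root of $z^{3} + (a_1 - 1)z + a_0 = 0$. By weak convergence of $\tilde{\mu}$ to the corresponding Dirac mass together with continuity of $\log|P_{\mathbf{a}}'|$ on the hyperbolic Julia set, the limiting Lyapunov exponent equals $-\log|P_{\mathbf{a}}'(z_{*}(\mathbf{a}))|$. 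I would expand $z_{*}$ via the implicit function theorem (or Lagrange inversion applied to the cubic), substitute into $P_{\mathbf{a}}'(z) = 3z^{2} + a_1$, and Taylor-expand $\log|\,\cdot\,|$ in $\mathbf{a}$ to fourth order. Writing $a_j = a_{j,\mathbb{R}} + i\,a_{j,\mathbb{I}}$, separating the real part of each resulting complex monomial, and collecting would then yield the displayed polynomial.

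The principal technical obstacle is the bookkeeping: carrying a multivariate Taylor expansion in two complex parameters through fourth order, with careful tracking of the interaction between the squaring $z_{*}^{2}$, the nonlinearity inside the logarithm, and the cross terms among $a_{j,\mathbb{R}}$ and $a_{j,\mathbb{I}}$, before collecting coefficients of each monomial $a_{1,\mathbb{R}}^{i}\,a_{1,\mathbb{I}}^{j}\,a_{0,\mathbb{R}}^{k}\,a_{0,\mathbb{I}}^{\ell}$. A secondary concern is justifying the interchange of the limit $p_i \uparrow 1$ with the integration; this reduces to weak convergence of the Bernoulli measures together with uniform continuity of the integrand on $\mathcal{J}_{\mathbf{a}}$, both of which are guaranteed by hyperbolicity and the hypothesis $|a_i| < 1$.
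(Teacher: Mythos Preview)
Your strategy differs from the paper's in one essential place. The paper does not pass to a fixed point in the limit $p_i\uparrow 1$; instead it asserts the reduction $\mathcal{L}_{\mu}(P_{\mathbf a})=-\log 3-\int_{S^1}\log|\Phi_{\mathbf a}|\,d\mu$, expands $\log|\Phi_{\mathbf a}(z)|=\log\bigl|1+\bar z\sum_{i+j\ge 1}\varphi_{(i,j)}(z)a_1^ia_0^j\bigr|$ term by term, and then evaluates each of the integrals in \eqref{a1}--\eqref{a1sqa0sq} against the Bernoulli measure, both at the equidistributed point and as $p_i\uparrow 1$. Your route for the degenerate limit---weak convergence of $\mu_{(p_1,p_2,p_3)}$ to the Dirac mass at a repelling fixed point $z_*$ and a direct Taylor expansion of $-\log|P_{\mathbf a}'(z_*)|$---is conceptually cleaner and bypasses the $\varphi_{(i,j)}$ bookkeeping entirely.

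There is, however, a genuine gap in your plan: carrying out your expansion will \emph{not} produce the polynomial displayed in the theorem. For the fixed point near $1$ one has, from $z^3+(a_1-1)z+a_0=0$, that $z_*=1-\tfrac12(a_1+a_0)+O(2)$, hence $P_{\mathbf a}'(z_*)=3z_*^2+a_1=3-2a_1-3a_0+O(2)$ and
\[
-\log|P_{\mathbf a}'(z_*)|=-\log 3+\tfrac{2}{3}\,a_{1,\mathbb R}+a_{0,\mathbb R}+O(2),
\]
whereas the theorem states linear coefficients $\tfrac12,\tfrac12$. (The other repelling fixed point near $-1$ gives $\tfrac{2}{3}a_{1,\mathbb R}-a_{0,\mathbb R}$, so the limit even depends on which $p_i\uparrow 1$.) The source of the discrepancy is the paper's reduction: the identity $\log|P_c'(z)|=\log 2+\log|z|$ that makes $\mathcal{L}=-\log 2-\int\log|\Phi_c|\,d\mu$ exact in the quadratic case has no analogue for $P_{\mathbf a}'(z)=3z^2+a_1$; in the point-mass limit the paper is effectively computing $-\log 3-\log|z_*|$, which indeed yields $\tfrac12 a_{1,\mathbb R}+\tfrac12 a_{0,\mathbb R}$ but is not $-\log|P_{\mathbf a}'(z_*)|$. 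So your method is sound for computing the actual Lyapunov exponent, but it cannot recover the stated formula; to match the paper you must adopt its reduction rather than evaluate $-\log|3z_*^2+a_1|$ directly.
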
 

\section{Pressure Function}
\label{pressure}

\noindent 
In this section, we define the pressure function and recall some basic results that will be useful in our analysis. The results stated in this section are true for any general polynomial $P$ restricted on its Julia set $\mathcal{J}_{P}$. For any continuous function $f : \mathcal{J}_{P} \longrightarrow \mathbb{R}$, we define its \emph{pressure} by 
\begin{equation} 
\label{Pr}
\mathfrak{P} (f)\ \ :=\ \ \sup_{\mu \in \mathcal{M} (\mathcal{J}_{P})} \left\{ h_{\mu} (P) + \int_{\mathcal{J}_{P}} f d \mu \right\},
\end{equation} 
where the supremum is taken over the space of all $P$-invariant probability measures supported on $\mathcal{J}_{P}$, denoted by $\mathcal{M}(\mathcal{J}_{P})$. Further, $h_\mu(P)$ denotes the entropy of $P$ with respect to the measure $\mu$. For more properties of pressure, interested readers are referred to \cite{Walter}. The pressure function is also characterised as follows.
\medskip 

\noindent 
\begin{theorem}[\cite{Walter}] 
\label{er}
For any $f \in \mathcal{C} (\mathcal{J}_{P}, \mathbb{R})$,
\begin{equation}
\mathfrak{P}(f)\ \ =\ \ \lim_{n\to \infty} \frac{1}{n} \log \left( \sum_{z \in \mathcal{J}_{P} \cap {\rm Fix}_{n} (P)}e^{f(z) + f(Pz) + \cdots + f(P^{n - 1}z)} \right).
\end{equation}
\end{theorem}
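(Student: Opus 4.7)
The plan is to reduce the statement to the full one-sided shift $(\sigma, \Sigma_d^+)$ using the conjugacy in \eqref{Conjugacy} and then invoke classical thermodynamic formalism on the shift. Set $g := f \circ \Phi_P \circ \Psi$, a continuous function on $\Sigma_d^+$. Since $\Phi_P \circ \Psi$ is a topological conjugacy, it induces a bijection between $\mathcal{J}_P \cap \mathrm{Fix}_n(P)$ and $\mathrm{Fix}_n(\sigma)$ that preserves Birkhoff sums, and it is a homeomorphism of the two spaces of invariant probability measures that preserves both entropy and integrals. Hence $\mathfrak{P}(f) = \mathfrak{P}(g)$, and the periodic-point sum for $P$ coincides with the corresponding sum for $\sigma$, so the whole problem transfers to the shift, where $|\mathrm{Fix}_n(\sigma)| = d^n$ is explicit.

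On the shift I would establish the two-sided bound. For the upper bound, I would exploit the variational principle: for each $n$, the periodic measure
\[
\mu_n \;:=\; \frac{1}{Z_n(g)} \sum_{y\in\mathrm{Fix}_n(\sigma)} e^{S_n g(y)}\, \nu_y, \qquad Z_n(g) = \sum_{y\in\mathrm{Fix}_n(\sigma)} e^{S_n g(y)},
\]
with $\nu_y$ the normalised counting measure on the orbit of $y$, is $\sigma$-invariant. A Misiurewicz-type inequality together with uniform continuity of $g$ gives
\[
h_{\mu_n}(\sigma) + \int g\, d\mu_n \;\geq\; \tfrac{1}{n}\log Z_n(g) - \varepsilon_n, \qquad \varepsilon_n \downarrow 0.
\]
Passing to a weak-$*$ limit $\mu$ and using upper semi-continuity of the entropy map (valid because $\sigma$ is expansive on $\Sigma_d^+$) yields $\mathfrak{P}(g) \geq \limsup_n \frac{1}{n}\log Z_n(g)$. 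For the lower bound, I would pick $\mu^* \in \mathcal{M}(\Sigma_d^+)$ realising the supremum in \eqref{Pr} up to $\varepsilon$, use the Shannon–McMillan–Breiman theorem to obtain many $n$-cylinders on which both $\mu^*$ and $S_n g$ are well approximated, and observe that the full shift has the specification property, so each such cylinder contains an element of $\mathrm{Fix}_n(\sigma)$. Counting these contributions produces $\frac{1}{n}\log Z_n(g) \geq h_{\mu^*}(\sigma) + \int g\, d\mu^* - \varepsilon$ for large $n$.

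The main obstacle is that the statement is asserted for merely continuous $f$, whereas the sharpest periodic-orbit counting arguments typically assume H\"older regularity (so that $S_n g(y)$ can be replaced by $S_n g$ evaluated at a canonical point of the cylinder containing $y$ with bounded error). I would overcome this by first proving the equality for locally constant cylinder functions, where the needed regularity is automatic and the computation reduces to combinatorics on $\Sigma_d^+$, and then approximating a general continuous $g$ uniformly by cylinder functions $g_k$. Both sides of the proposed identity are continuous in $g$ with respect to the sup-norm: the left-hand side by the definition of $\mathfrak{P}$ as a supremum, and the right-hand side because $|S_n g - S_n g_k| \leq n \|g - g_k\|_\infty$, so $\frac{1}{n}\log Z_n(g)$ varies by at most $\|g - g_k\|_\infty$ under such approximations. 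Letting $k \to \infty$ propagates the identity from cylinder functions to all of $\mathcal{C}(\mathcal{J}_P, \mathbb{R})$.
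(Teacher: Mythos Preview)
The paper does not prove this theorem; it is quoted from Walters \cite{Walter} as background and no argument is supplied. So there is nothing to compare your proposal against on the paper's side.

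Your outline is essentially the standard route (reduce to a symbolic model, use expansiveness for upper semicontinuity of entropy, specification for the lower bound, and a uniform approximation by locally constant functions to pass from H\"older/cylinder regularity to mere continuity). Two points deserve care. First, $\Phi_P\circ\Psi$ is \emph{not} a topological conjugacy in the paper's setting: $\Psi:\Sigma_d^+\to S^1$ is the $d$-adic coding, which identifies the constant sequences $\overline{0}$ and $\overline{d-1}$ (and more generally is only a semiconjugacy), so the induced map on $\mathrm{Fix}_n$ is not a bijection; one has $|\mathrm{Fix}_n(\sigma)|=d^n$ while $|\mathrm{Fix}_n(P_0)|=d^n-1$. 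The discrepancy is bounded and disappears after taking $\frac{1}{n}\log(\cdot)$, so your conclusion survives, but the sentence ``$\Phi_P\circ\Psi$ is a topological conjugacy, it induces a bijection\ldots'' is false as written and should be replaced by a semiconjugacy argument together with the observation that the fibres are uniformly finite. Second, the conjugacy in \eqref{Conjugacy} is asserted in the paper only for hyperbolic $P$, whereas Theorem~\ref{er} is stated for an arbitrary polynomial on its Julia set; your reduction therefore does not cover the full statement. A cleaner route that avoids both issues is to work directly on $\mathcal{J}_P$: a hyperbolic (or more generally expansive) polynomial on its Julia set is itself expansive with the specification property, so the Bowen--Walters argument applies intrinsically without passing through $\Sigma_d^+$.
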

\medskip 

\noindent 
An \emph{equilibrium state} for the function $f$ denoted by $\mu^{(f)} \in \mathcal{M} (\mathcal{J}_{P})$ is a measure realising its supremum in the definition of pressure, \eqref{Pr}. If the real-valued continuous function $f  \in \mathcal{C}^{\alpha} (\mathcal{J}_{P}, \mathbb{R})$ for some H\"{o}lder exponent $\alpha$, then the existence of its unique equilibrium state is guaranteed by Denker and Urbanski in \cite{Denker}. Observe that the measure of maximal entropy of $P$; $\nu$ is then nothing but $\mu^{(f\equiv 0)}$. We now state an important theorem due to Ruelle \cite{Ruelle} and as observed by Coelho \textit{et al} \cite{Coelho}.
\medskip 

\noindent 
\begin{theorem}(\cite{Ruelle, Coelho}) 
\label{4}
For any two H\"{o}lder continuous functions $f, g \in \mathcal{C}^{\alpha} (\mathcal{J}_{P}, \mathbb{R})$ and $|t|$ sufficiently small,
\begin{equation}
\frac{d}{dt} \mathfrak{P} ( g + tf )\bigg|_{t = 0}\ \ =\ \ \int f d \mu^{(g)}.
\end{equation}
\end{theorem}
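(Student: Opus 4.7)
The plan is to exploit the variational principle defining the pressure, together with uniqueness of the equilibrium state for H\"older potentials (guaranteed for $g + tf \in \mathcal{C}^{\alpha}(\mathcal{J}_P, \mathbb{R})$ by Denker-Urbanski). First I would establish a two-sided bound. Since $\mu^{(g)}$ achieves the supremum in the variational principle for $g$, plugging this measure into the variational principle for $g + tf$ gives
\[
\mathfrak{P}(g + tf)\ \ \geq\ \ h_{\mu^{(g)}}(P) + \int (g + tf)\, d\mu^{(g)}\ \ =\ \ \mathfrak{P}(g) + t \int f\, d\mu^{(g)}.
\]
Symmetrically, writing $\mu^{(g+tf)}$ for the equilibrium state of the perturbed potential and inserting it into the variational principle for $g$, I obtain $\mathfrak{P}(g + tf) \leq \mathfrak{P}(g) + t \int f\, d\mu^{(g+tf)}$.

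Combining these and splitting the analysis for $t > 0$ and $t < 0$, the one-sided difference quotients satisfy
\[
\int f\, d\mu^{(g)}\ \ \leq\ \ \liminf_{t \downarrow 0} \frac{\mathfrak{P}(g + tf) - \mathfrak{P}(g)}{t}\ \ \leq\ \ \limsup_{t \downarrow 0} \frac{\mathfrak{P}(g + tf) - \mathfrak{P}(g)}{t}\ \ \leq\ \ \limsup_{t \downarrow 0} \int f\, d\mu^{(g + tf)},
\]
with the analogous reversed chain for $t \uparrow 0$. Thus the theorem reduces to showing weak-$*$ continuity of $t \mapsto \mu^{(g + tf)}$ at $t = 0$, so that both bounds collapse to $\int f\, d\mu^{(g)}$.

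For this continuity step I would pass to the symbolic model $\Sigma_d^+$ using the conjugacy \eqref{Conjugacy} and work with the Ruelle transfer operator $\mathcal{L}_{g + tf}$ acting on the Banach space of H\"older functions of exponent $\alpha$. Under hyperbolicity of $\mathcal{J}_P$ and H\"older regularity of the potential, this operator is quasi-compact with a simple leading eigenvalue $e^{\mathfrak{P}(g + tf)}$ isolated in the spectrum (the Ruelle-Perron-Frobenius theorem). Kato's analytic perturbation theory then gives analytic dependence of the leading eigenvalue, the eigenfunction $h_t$, and the dual eigenmeasure $\nu_t$ on $t$ for $|t|$ small. The equilibrium state is reconstructed as $d\mu^{(g+tf)} = h_t \, d\nu_t$, so $t \mapsto \mu^{(g+tf)}$ is in fact real-analytic in the weak-$*$ sense on a neighborhood of $0$, which is far more than enough.

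The main obstacle is this spectral input: verifying that $\mathcal{L}_{g + tf}$ is a genuine analytic (operator-norm) perturbation of $\mathcal{L}_g$ on the chosen H\"older space, and that the spectral gap separating $e^{\mathfrak{P}(g)}$ from the rest of the spectrum is preserved under perturbation. Once that gap and simplicity persist, differentiability of $\mathfrak{P}(g + tf)$ at $t = 0$, and the identification of the derivative with $\int f\, d\mu^{(g)}$, follow immediately from the sandwich inequality above. Higher-order smoothness (useful, incidentally, for the later computations in Section \ref{computation}) comes for free from the same analytic perturbation argument, but is not required for the present statement.
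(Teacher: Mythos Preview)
The paper does not supply a proof of this theorem at all; it is stated as a known result imported from Ruelle \cite{Ruelle} and Coelho--Parry \cite{Coelho}, and the authors proceed immediately to the corollary about analyticity of $g \mapsto \int f\, d\mu^{(g)}$. So there is no ``paper's proof'' to compare against: your proposal fills in something the paper deliberately quotes.

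That said, your argument is correct and is essentially the standard route found in the cited sources. The variational sandwich
\[
\mathfrak{P}(g) + t\!\int f\, d\mu^{(g)}\ \leq\ \mathfrak{P}(g+tf)\ \leq\ \mathfrak{P}(g) + t\!\int f\, d\mu^{(g+tf)}
\]
(for $t>0$, reversed for $t<0$) is exactly how one pins down the derivative, and the reduction to weak-$*$ continuity of $t\mapsto \mu^{(g+tf)}$ is the right move. Your handling of that continuity via the Ruelle--Perron--Frobenius spectral gap and Kato perturbation theory is the approach of \cite{Ruelle} and \cite{Coelho}; indeed it delivers real-analyticity of $t\mapsto \mathfrak{P}(g+tf)$, which the paper later uses explicitly (see the remark preceding Corollary~\ref{Holder dimension}). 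One minor point: you could shortcut slightly by noting that any weak-$*$ limit of $\mu^{(g+t_nf)}$ along $t_n\to 0$ is automatically an equilibrium state for $g$ (using upper semicontinuity of entropy on this expanding system), and then invoke uniqueness from Denker--Urbanski to identify it with $\mu^{(g)}$; this avoids the full spectral machinery for the bare differentiability statement, though you would still need the transfer-operator route for the analyticity that the paper relies on downstream.
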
 
\medskip 

\noindent 
Two real-valued continuous functions $f$ and $g$ defined on $\mathcal{J}_{P}$ are said to be \emph{cohomologous} to each other with respect to the polynomial $P$ if there exists a real-valued continuous function $h$ defined on $\mathcal{J}_{P}$ such that $f + h = g + h \circ P$. On the space of H\"{o}lder continuous functions, the map $f \longmapsto \mathfrak{P}(f)$ is real analytic. By the term real analytic, we mean that given an analytic function $f_{s} (z) = \sum_{n \geq 0} \Psi_{n} (z) s^{n}$ where $|s| \leq \epsilon$, for some small  $\epsilon > 0$, one can express $\mathfrak{P}(f_{s})$ as a summation of terms involving powers of $s$. If $f$ and $g$ are H\"{o}lder continuous and $f$ is not cohomologous to a constant, then the function $t \longmapsto \mathfrak{P} (g + tf)$ is strictly convex and real analytic, \textit{i.e.}, $\mathfrak{P} (g + tf)$ can be expressed as a summation of terms involving powers of $t$. A consequence of the above property and theorem \eqref{4} is captured in the next result. 
\medskip 

\noindent 
\begin{corollary} 
\label{Holder dimension}
For any fixed H\"{o}lder continuous function $f$, the map $g \longmapsto \int f d \mu^{(g)}$ is real analytic.
\end{corollary}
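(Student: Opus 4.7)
\noindent
The plan is to reduce the statement directly to the two ingredients recalled immediately above the corollary: the real-analyticity of the pressure functional $\mathfrak{P}$ on $\mathcal{C}^{\alpha}(\mathcal{J}_{P}, \mathbb{R})$ and the Ruelle--Coelho differentiation formula from theorem \eqref{4}. Heuristically, $\int f \, d \mu^{(g)}$ is precisely the directional derivative $\frac{d}{dt} \mathfrak{P}(g + tf) \big|_{t = 0}$ of an analytic functional, so it ought to be analytic in $g$ as well, and the task is only to verify this cleanly inside the framework the paper has set up.

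\medskip

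\noindent
To make this precise I would fix $f \in \mathcal{C}^{\alpha}(\mathcal{J}_{P}, \mathbb{R})$ and, to verify analyticity of $g \longmapsto \int f \, d \mu^{(g)}$ along one-parameter families (which is the notion of real-analyticity used in the paper), select an arbitrary basepoint $g_{0}$ and direction $\psi \in \mathcal{C}^{\alpha}(\mathcal{J}_{P}, \mathbb{R})$, and look at the two-parameter function
\[
F(s, t) \ :=\ \mathfrak{P}\bigl( g_{0} + s \psi + t f \bigr).
\]
By the paper's real-analyticity of $\mathfrak{P}$ applied to the two families $s \mapsto g_{0} + s \psi$ and $t \mapsto g_{0} + s \psi + t f$, $F$ is separately real analytic in each argument, and a Hartogs-type upgrade promotes this to joint real-analyticity, yielding a convergent expansion $F(s, t) = \sum_{m, n \ge 0} c_{m, n}\, s^{m} t^{n}$ on a small bidisk about the origin. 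Theorem \eqref{4} then identifies the coefficient of $t^{1}$, viewed as a function of $s$, with the integral of interest:
\[
\int f \, d \mu^{(g_{0} + s \psi)} \ =\ \frac{\partial F}{\partial t}(s, 0) \ =\ \sum_{m \ge 0} c_{m, 1} \, s^{m},
\]
which is a convergent power series in $s$. Since $g_{0}$ and $\psi$ were arbitrary, the assignment $g \longmapsto \int f \, d \mu^{(g)}$ is real analytic.

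\medskip

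\noindent
The step that will demand the most care is the upgrade from separate to joint real-analyticity of $F$ on the infinite-dimensional space $\mathcal{C}^{\alpha}(\mathcal{J}_{P}, \mathbb{R})$. This is a Banach-space Hartogs-type statement that ultimately traces back to the transfer-operator construction by which Ruelle establishes the analyticity of $\mathfrak{P}$ in the first place; alternatively, one can bypass it by expanding $\mathfrak{P}(g_{0} + s \psi + t f)$ only in $t$ around $t = 0$, using theorem \eqref{4} to identify the first coefficient with $\int f \, d \mu^{(g_{0} + s \psi)}$, and verifying term by term that this coefficient extends analytically in $s$. Either route concludes the argument.
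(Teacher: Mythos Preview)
Your proposal is correct and follows exactly the route the paper indicates: the paper gives no separate proof but simply declares the corollary ``a consequence of the above property [real-analyticity of $\mathfrak{P}$] and theorem \eqref{4},'' and your argument is a careful unpacking of that sentence. The joint-analyticity (Hartogs-type) step you flag is a technicality the paper leaves implicit, but it is covered by Ruelle's transfer-operator analyticity on the full Banach space $\mathcal{C}^{\alpha}$, so no gap arises.
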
 
\medskip 

\noindent 
The study of dimension theory was necessitated by the fact that measure theory failed to distinguish between countable and nowhere dense uncountable sets. Hausdorff dimension successfully captured the idea of a fractional dimension that overcame this failing. The \emph{Hausdorff dimension} of any $X \subset \mathbb{C}$ is defined as,
\begin{equation}
\begin{aligned}
{\rm dim}_{H}(X)\ \ :=\ \ \inf \Bigg\{ s > 0\ :\ \lim_{\epsilon \to 0} \bigg( & \inf_{\mathcal{U}} \Big\{ \sum_{i} \left({\rm diam} U_{i}\right)^{s} : \\ 
& \mathcal{U} = \{U_{i}\}\ \text{ is an}\ \epsilon-\text{open cover of}\ X \Big\} \bigg) = 0\Bigg\}.
\end{aligned}
\end{equation} 
It is then a simple observation from the definition that any set $E \subset \mathbb{C}$ satisfying ${\rm dim}_{H} (E) < 2$ should have no area.
\medskip 

\noindent 
For purposes of this paper, we shall be interested in the family of real-valued H\"{o}lder continuous functions $f_{s} := - s \log |P'| \in \mathcal{C}^{\alpha} (\mathcal{J}_{P}, \mathbb{R})$ for $s \in [0, 2]$. Then by theorem \eqref{er}, we have 
\[ \mathfrak{P} (- s \log |P'|)\ \ =\ \ \lim_{n \to \infty} \frac{1}{n} \log \left( \sum_{z \in \mathcal{J}_{P} \cap {\rm Fix}_{n} (P)} \frac{1}{|(P^{n})'(z)|^{s}} \right).\] 
The following result due to Bowen and Ruelle provides the relationship between the pressure function and the Hausdorff dimension of $\mathcal{J}_{P}$. 
\medskip 

\noindent 
\begin{theorem}[\cite{Ruelle}]
The unique solution to the equation $\mathfrak{P}(f_{s}) = 0$ is the Hausdorff dimension of $\mathcal{J}_{P}$.
\end{theorem}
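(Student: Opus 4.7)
The plan is to prove the theorem in two logically separable stages: first, to show that the real analytic function $F(s) := \mathfrak{P}(-s \log|P'|)$ has a unique zero $s_{0} \in (0,\infty)$; and second, to identify $s_{0}$ with $\dim_{H}(\mathcal{J}_{P})$ via a symbolic dynamics argument.

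For the first stage, I would observe that since $\log|P'|$ is H\"{o}lder continuous on the hyperbolic set $\mathcal{J}_{P}$ and (in non-trivial cases) not cohomologous to a constant, the remarks preceding Corollary \ref{Holder dimension} ensure that $F$ is real analytic and strictly convex in $s$. Theorem \ref{4} gives
\[ F'(s)\ \ =\ \ -\int \log|P'|\, d\mu^{(-s\log|P'|)}. \]
The hyperbolicity hypothesis, together with $P$-invariance of the equilibrium state, yields $\int \log|P'|\, d\mu^{(-s\log|P'|)} = \frac{1}{N}\int \log|(P^{N})'|\, d\mu^{(-s\log|P'|)} > 0$ once $N$ is chosen so that $|(P^{N})'| > 1$ uniformly on $\mathcal{J}_{P}$. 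Hence $F$ is strictly decreasing. Since $F(0) = h_{\mathrm{top}}(P|_{\mathcal{J}_{P}}) = \log d > 0$ and $F(s) \to -\infty$ as $s \to \infty$, the intermediate value theorem delivers a unique zero $s_{0}$.

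For the second stage, I would exploit the conjugacy \eqref{Conjugacy} to set up a natural Markov partition $\{J_{i_{1} \cdots i_{n}}\}$ of $\mathcal{J}_{P}$ indexed by length-$n$ cylinders of $\Sigma_{d}^{+}$, where each $J_{i_{1} \cdots i_{n}}$ is the image of a reference piece under an inverse branch of $P^{n}$. Hyperbolicity guarantees that these inverse branches are univalent on a uniformly larger neighborhood, so the Koebe distortion theorem yields
\[ \mathrm{diam}(J_{i_{1} \cdots i_{n}})\ \ \asymp\ \ \frac{1}{|(P^{n})'(w_{i_{1} \cdots i_{n}})|} \]
uniformly in $n$ and in the choice of $w_{i_{1} \cdots i_{n}} \in J_{i_{1} \cdots i_{n}}$. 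Using these cells as an $\epsilon$-cover in the definition of Hausdorff dimension and invoking Theorem \ref{er} (whose limit can equivalently be computed over the $d^{n}$ Markov cells of depth $n$ in place of periodic points, by the specification property on hyperbolic Julia sets), one obtains
\[ \sum_{i_{1} \cdots i_{n}} \left(\mathrm{diam}\, J_{i_{1} \cdots i_{n}}\right)^{s}\ \ \sim\ \ e^{n F(s)}. \]
This collapses to $0$ for $s > s_{0}$, yielding $\dim_{H}(\mathcal{J}_{P}) \leq s_{0}$; a matching lower bound follows from the mass distribution principle applied to the equilibrium state $\mu^{(f_{s_{0}})}$, whose Gibbs property gives $\mu^{(f_{s_{0}})}(J_{i_{1} \cdots i_{n}}) \asymp |(P^{n})'(w_{i_{1} \cdots i_{n}})|^{-s_{0}}$, forcing $\dim_{H}(\mathcal{J}_{P}) \geq s_{0}$.

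The principal obstacle is the bridge between the periodic orbit formulation of pressure in Theorem \ref{er} and the cover based definition of Hausdorff dimension. Specifically, one must establish the Koebe type bounded distortion estimate for the inverse branches of $P^{n}$ (which relies crucially on hyperbolicity keeping critical orbits away from $\mathcal{J}_{P}$), and also confirm that the sum over periodic points in Theorem \ref{er} has the same exponential growth rate as the sum over Markov cells. The upper bound on $\dim_{H}$ is then a direct covering estimate, but the matching lower bound requires the Gibbs property of the equilibrium state $\mu^{(f_{s_{0}})}$, which is the most delicate ingredient of the argument.
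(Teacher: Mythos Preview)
Your outline is a faithful sketch of the classical Bowen--Ruelle argument and is essentially correct: strict monotonicity of $F(s)=\mathfrak{P}(-s\log|P'|)$ from hyperbolicity gives a unique zero $s_{0}$, and the bounded distortion / Gibbs measure argument identifies $s_{0}$ with $\dim_{H}(\mathcal{J}_{P})$. One minor remark: you invoke strict convexity via non-cohomology to a constant, but uniqueness of the zero already follows from the strict monotonicity you derive directly from $F'(s)<0$, so the convexity clause is not needed for this step.

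However, there is nothing to compare against: the paper does \emph{not} prove this theorem. It is stated as a result from the literature (attributed to Bowen and Ruelle, with citation \cite{Ruelle}) and is used as a black box in the surrounding discussion of pressure and Hausdorff dimension. So your proposal supplies a proof where the paper supplies only a citation; the argument you outline is the standard one found in the cited sources and in Zinsmeister's monograph \cite{Zinsmeister}.
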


\section{Computation of Lyapunov Exponents} 
\label{computation}

\noindent 
The rest of the paper is devoted to explicit computation of the Lyapunov exponent of the quadratic and cubic polynomial belonging to $\mathcal{Q}$ and $\mathcal{Q}'$, restricted on its Julia set, with respect to various measures associated to the Bernoulli measures and necessary analysis to prove our theorems. 

\subsection{Quadratic Polynomials}

\noindent 
Here, we consider the family of polynomial maps $P_{c}$ parametrised  by the complex constant $c$ as before:
\[P_{c} (z) = z^{2} + c \in \mathcal{Q},\] 
restricted on its Julia set denoted by $\mathcal{J}_{P_{c}} \equiv \mathcal{J}_{c}$. We shall adopt the method of computation from \cite{Zinsmeister, Sridharan}. Observe that  equation \eqref{Conjugacy} now looks like: 
\begin{equation}
\label{Conjugacy-quadratic}
P_{c} \circ \Phi_{c} \circ \Psi\ \ =\ \ \Phi_{c} \circ P_{0} \circ \Psi\ \ =\ \ \Phi_{c} \circ \Psi \circ \sigma.
\end{equation}
Here $P_{0}(z) \equiv R(z) = z^{2}$ and $\Phi_{c} = \Phi_{P_{c}}.$ We now write a theorem and include a short proof, for the convenience of the reader. This result can be found in \cite{Zinsmeister} (Theorem 6.4), \cite{Lyubich} (Proposition 1.12) and \cite{CarlesonGamelin} (Theorem 4.1, its proof and the note that follows the proof). 
\medskip 

\noindent 
\begin{theorem} \cite{CarlesonGamelin, Lyubich, Zinsmeister}
\label{Carleson,gamelin, Zinsmeister} 
The map $ c \longmapsto \Phi_{c} $ is an analytic map whenever $\mathcal{J}_{c}$ is hyperbolic.
\end{theorem}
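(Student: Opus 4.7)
The plan is to realise $\Phi_c$ via the inverse B\"ottcher coordinate at infinity, and then to deduce its analytic dependence on $c$ from the uniform expansion of $P_c$ on $\mathcal{J}_c$, which forces the inverse branches to be uniform contractions that themselves depend analytically on the parameter.

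First I would define, for $|z|$ sufficiently large,
\[
\varphi_c(z)\ \ :=\ \ \lim_{n \to \infty} \bigl( P_c^n(z) \bigr)^{1/2^n},
\]
using the principal branch normalised so that $\varphi_c(z)/z \to 1$ as $z \to \infty$. The convergence is locally uniform in $(z, c)$ on any region of the form $\{|z| > M\} \times \{|c| < 1\}$ with $M$ large enough, so $\varphi_c(z)$ is jointly analytic in $(z, c)$, and by construction $\varphi_c \circ P_c = \varphi_c^{2}$. Since $\mathcal{J}_c$ is hyperbolic and $c \in \mathbb{M}$, the filled Julia set $K_c$ is connected, and a standard monodromy argument extends $\varphi_c$ to a biholomorphism from $\widehat{\mathbb{C}} \setminus K_c$ onto $\{|w| > 1\}$. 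Hyperbolicity forces $\mathcal{J}_c$ to be a quasi-circle, so by Carath\'eodory's theorem the inverse $\psi_c := \varphi_c^{-1}$ extends continuously to $\overline{\{|w| > 1\}}$, and its restriction $\Phi_c := \psi_c|_{S^1}$ is the conjugating homeomorphism $S^1 \to \mathcal{J}_c$ appearing in \eqref{Conjugacy-quadratic}.

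To promote this to parameter analyticity without wrestling with boundary values, I would represent $\Phi_c(w)$ as the limit of a backward orbit of $P_c$. If $w = e^{2 \pi i \theta}$ and $\theta$ has binary expansion $\underline{x} = (x_1, x_2, \ldots) \in \Sigma_2^+$, then
\[
\Phi_c(w)\ \ =\ \ \lim_{n \to \infty}\ P_{c, x_1}^{-1} \circ P_{c, x_2}^{-1} \circ \cdots \circ P_{c, x_n}^{-1} (z_0),
\]
where $P_{c, i}^{-1}$ denotes the $i$-th inverse branch of $P_c$ and $z_0$ is any base point in a sufficiently small neighbourhood of $\mathcal{J}_c$. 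Hyperbolicity implies that each branch is a strict contraction with Lipschitz constant at most $C \lambda^{-1}$, uniformly in $c$ on compacta of the hyperbolic locus, so the composition converges geometrically, uniformly in $c$. Each finite composition is analytic in $c$, since the inverse branches depend analytically on $c$ by the holomorphic implicit function theorem applied to $P_c(z) - \zeta = 0$, and a locally uniform limit of holomorphic functions is holomorphic; this yields the claim.

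The main obstacle is precisely the passage from analyticity of $\psi_c(w)$ on $|w| > 1$ to analyticity of the boundary map $\Phi_c$ on $|w| = 1$, since in general the boundary extension of a holomorphic family of conformal maps need not depend regularly on the parameter. The backward-orbit description above sidesteps this by identifying $\Phi_c(w)$ directly as the limiting fixed-point datum of a contracting analytic family, making uniformity in $c$ transparent. Alternatively, one could appeal to the $\lambda$-lemma of Ma\~{n}\'{e}, Sad and Sullivan, which asserts that the hyperbolic Julia set undergoes a holomorphic motion over the hyperbolic component containing $c$; after the conjugating change of coordinates $\Psi$ this motion coincides with $c \longmapsto \Phi_c$, and again delivers the required analyticity.
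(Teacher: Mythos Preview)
Your opening construction of the B\"ottcher coordinate $\varphi_c(z) = \lim_n (P_c^n(z))^{1/2^n}$ is precisely the paper's entire proof: the authors define $(\Phi_c)_n(z) := (P_c^n(z))^{2^{-n}}$, observe the functional equation $[(\Phi_c)_n]^2 = (\Phi_c)_{n-1}\circ P_c$, invoke uniform convergence, and conclude analyticity in $c$ directly from the analyticity of $c\mapsto P_c$. (Their notation is slightly at odds with the commutative diagram --- the $\Phi_c$ appearing in their proof conjugates in the direction of your $\varphi_c$, not its inverse.)

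Where you go beyond the paper is in explicitly isolating, and then resolving, the passage from analyticity of the B\"ottcher map on $\{|z|>M\}$ to analyticity of the boundary conjugacy $\Phi_c\colon S^1\to\mathcal{J}_c$; the paper's sketch does not separate these steps. Your inverse-branch argument is correct and standard: hyperbolicity keeps the postcritical set uniformly away from $\mathcal{J}_c$, so the branches $P_{c,i}^{-1}$ are single-valued analytic functions of $(z,c)$ on a neighbourhood that is stable under small perturbations of $c$, their compositions contract geometrically with constants uniform in $c$, and a locally uniform limit of holomorphic functions is holomorphic. The $\lambda$-lemma alternative is equally valid and is in fact the slickest way to package the result. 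Either route furnishes a more complete justification than the paper's brief sketch, while your first paragraph already reproduces everything the paper actually writes down.
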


\begin{proof} 
Let the conjugacy map $\Phi_{c}(z) = z + \cdots$ satisfy $\Phi_{c}( P_{c}(z)) = P_{0} (\Phi_{c}(z)) = \Phi_{c}^{2}(z)$. For large $|z|$, we have $P_{c}^{n}(z) = z^{2^{n}}(1+ \cdots)$. Defining $\left(\Phi_{c}\right)_{n}(z) := (P_{c}^{n}(z))^{2^{-n}} = z (1 + \cdots)^{2^{-n}}$, we observe that the sequence $\left(\Phi_{c}\right)_{n}$ satisfies 
\begin{eqnarray} 
\label{Functionalequationford=2} 
\left[\left(\Phi_{c}\right)_{n}\right]^{2} (z) = \left(\Phi_{c}\right)_{n - 1} \circ P_{c} (z),
 \end{eqnarray}
and that the sequence $\left(\Phi_{c}\right)_{n} \to \Phi_{c} $ uniformly. Therefore, the uniform limit $ \Phi_{c} $ is a solution of the functional equation \eqref{Functionalequationford=2}, and thus satisfies equation \eqref{Conjugacy-quadratic}. Since $c \longmapsto P_{c}$ is analytic, the map $c \longmapsto \Phi_{c}$ is analytic too, whenever $\mathcal{J}_{c}$ is hyperbolic. 
\end{proof} 
\medskip 
 
\noindent 
Thus in fact, when $P_{c} \in \mathcal{Q}$, we will consider 
\[ \Phi_{c} (z)\ \ =\ \ z + \sum_{n \geq 1} \varphi_{n} (z) c^{n}.\]
Taking the appropriate sides in equation \eqref{Conjugacy-quadratic} gives
\[ \left( z^{2} + \sum_{n \geq 1} \varphi_{n} (z^{2}) c^{n} \right) - \left( z + \sum_{n \geq 1} \varphi_{n} (z) c^{n} \right)^{2} - c\ =\ 0.\]  
Then by merely comparing the coefficients, we obtain from the computations in \cite{Zinsmeister, Sridharan} that,
\begin{eqnarray} 
\label{Comparing}
\varphi_{1} (z) & = & - z \sum_{i_{1} \geq 1} \frac{1}{2^{i_{1}} z^{2^{i_1}}} \\ 
\label{Comparing2}
\varphi_{2} (z) & = & - z \sum_{i_3 \geq 1}\frac{1}{2 ^ {i_{3} }} \sum_{i_2 \geq 1}\sum_{i_1 \geq 1}^{i_2} \frac{1}{2^{i_{2}+1}}  \frac{1}{z^{2^{i_3}-2^{i_{3}-1}+2^{i_3-1}(2^{i_1}+2^{i_2-i_1+1}-1)}}  . 
\end{eqnarray}
We now consider the pressure of the function $f_{1} = - \log |P_{c}'|$ and find using the conjugacy in equation  \eqref{Conjugacy-quadratic} that
\begin{eqnarray*}
\mathfrak{P} (f_{1}) & = & \sup_{\mu \in \mathcal{M} (\mathcal{J}_{c})} \left\{ h_{\mu} (P_{c}) + \mathcal{L}_{\mu_{(p_{1}, p_{2})}} (P_{c}) \right\} 
 =  \sup_{\nu \in \mathcal{M} (\mathit{S}^{1})} \left\{ h_{\nu} (P_{0}) - \log 2 - \int_{\mathit{S}^{1}} \log |\Phi_{c} (z) | d \nu \right\}.
\end{eqnarray*}
Consider the $P_{c}$-invariant probability measure $\mu_{(p_{1}, p_{2})}$ associated to the Bernoulli measure $\tilde{\mu}_{(p_{1}, p_{2})}$ with $p_{1} + p_{2} = 1$ and $p_{i} > 0$ for $i = 1, 2$. A simple calculation in \cite{Walter} then says that the entropy of the polynomial $P_{c}$, namely $h_{\mu_{(p_{1}, p_{2})}} (P_{c})$ is $- \log (p_{1}^{p_{1}} p_{2}^{p_{2}}) = \log 2$ when $p_{1} = p_{2}$. 

\subsubsection*{Case - 1 : $c \in \mathbb{M} \cap \mathbb{R}$ with $|c| < 1$} 
\label{secondsubsection}

\noindent 
Here, we compute $\int \log |\Phi_{c} (z)| d \mu_{(p_{1}, p_{2})}$ when $c \in \mathbb{M} \cap \mathbb{R}$, and $ c \equiv c_{\mathbb{R} }$, satisfying $|c| < 1$. 
\begin{eqnarray} 
\label{|c|<1}
\int \log |\Phi_{c} (z)| d \mu_{(p_{1}, p_{2})} & = & c \left[ \int {\rm Re}\left( \overline{z} \varphi_{1} (z) \right) d \mu_{(p_{1}, p_{2})} \right] \nonumber \\
& & +\ c^{2} \Bigg[ \int \bigg( {\rm Re} \left( \overline{z} \varphi_{2} (z) \right)-\frac{1}{2} \left( {\rm Re}\left( \overline{z} \varphi_{1} (z) \right) \right)^{2} \nonumber \\ 
& & \ \ \ \ \ \ \ \ \ \ \ \ \ \ \ \ \ \ \ +\frac{1}{2} \left( {\rm Im}\left( \overline{z} \varphi_{1} (z) \right) \right)^{2}   \bigg) d \mu_{(p_{1}, p_{2})} \Bigg]  +\ O(c^{3}). 
\end{eqnarray} 
We now focus on the coefficients upto $c^{2}$ and evaluate the integrals with respect to the measure $\mu_{(p_{1}, p_{2})}$ associated to the Bernoulli measure $\tilde{\mu}_{(p_{1}, p_{2})}$. Making use of the computations in \eqref{Comparing}, \eqref{Comparing2}, we obtain
\begin{eqnarray*}
\int {\rm Re} \left( \overline{z} \varphi_{1} \right) d \mu_{(p_{1}, p_{2})} 
& & \begin{cases} 
= 0 \quad & \text{if} \quad p_{1} = p_{2} = \frac{1}{2}, \\ 
\to -1 \quad & \text{as}\ \quad p_{i} \uparrow 1.
\end{cases} \\
\int {\rm Re} \left( \overline{z} \varphi_{2} \right) d \mu_{(p_{1}, p_{2})} 
& & \begin{cases}
= 0 \quad & \text{if} \quad p_{1} = p_{2} = \frac{1}{2}, \\ 
\to -1 \quad & \text{as}\ \quad p_{i} \uparrow 1; 
\end{cases} \\
\frac{1}{2} \int \left( {\rm Im} \left( \overline{z} \varphi_{1} \right) \right)^{2} d \mu_{(p_{1}, p_{2})}  
& = & \ \ \ \ \ \ 0,\ \quad \text{irrespective of the value of}\ p_{i}; \\ 
 - \frac{1}{2} \int \left( {\rm Re} \left( \overline{z} \varphi_{1} \right) \right)^{2} d \mu_{(p_{1}, p_{2})} 
& & \begin{cases}
= 0 \quad & \text{if} \quad p_{1} = p_{2} = \frac{1}{2}, \\ 
\to - \frac{1}{2} \quad & \text{as}\ \quad p_{i} \uparrow 1. 
\end{cases}
\end{eqnarray*}
Recall that when $p_{1} = p_{2} = \frac{1}{2}$, the measure $\mu_{(p_{1}, p_{2})}$ associated to the Bernoulli measure $\tilde{\mu}_{(p_{1}, p_{2})}$ corresponds to the Haar measure on $\mathit{S}^1$. Therefore, by considering up to coefficients of $c^{2}$ in equation \eqref{|c|<1} and making use of the evaluation above, we infer that
\begin{eqnarray} 
 \mathcal{L}_{\mu_{(p_{1}, p_{2})}} (P_{c})  & = & - \log 2 - \int \log |\Phi_{c} (z)| d \mu_{(p_{1}, p_{2})} \nonumber \\ 
& & \begin{cases}
= - \log 2 \quad & \text{if} \quad p_{1} = p_{2} = \frac{1}{2}, \\ 
\to - \log 2 + c + \frac{3}{2} c^{2}   \quad & \text{as}\ \quad p_{1} \uparrow 1\ \text{or}\ \quad p_{2} \uparrow 1.
\end{cases}
\end{eqnarray} 

\subsubsection*{Case - 2 : $c \in \mathbb{M}$ with $|c| < 1$}

\noindent 
We begin the computations here with the observation that this case subsumes  the earlier one. We have  $c = c_{\mathbb{R}} + i c_{\mathbb{I}}$ with $c_{\mathbb{R}}, c_{\mathbb{I}} \in \mathbb{R}$, then $|c_{\mathbb{R}}| < 1$ and $|c_{\mathbb{I}}| < 1$. A computation similar to the earlier case then yields,
\begin{eqnarray} 
\label{second|c|<1}
\int \log | \Phi_{c} (z) | d \mu_{(p_{1}, p_{2})} & = & \int {\rm Re} \left( c \overline{z} \varphi_{1} (z) \right) d \mu_{(p_{1}, p_{2})} \nonumber \\
& & + \int \bigg[ {\rm Re} \left( c^{2} \overline{z} \varphi_{2} (z) \right) + \frac{1}{2} \left( {\rm Im} \left( c \overline{z} \varphi_{1} (z) \right) \right)^{2} \nonumber \\ 
& & \ \ \ \ \ \ \ \ \ \ \ \ \ \ \ \ -\frac{1}{2} \left( {\rm Re} \left( c \overline{z} \varphi_{1} (z) \right) \right)^{2} \bigg] d \mu_{(p_{1}, p_{2})}  +\ O(c^{3}).
\end{eqnarray}
Now, we focus on the coefficients upto $c^{2}$ and evaluate the integrals with respect to measure $\mu_{(p_{1}, p_{2})}$ associated to Bernoulli measure $\tilde{\mu}_{(p_{1}, p_{2})}$. Making use of the computations in \eqref{Comparing} and \eqref{Comparing2} yet again, we obtain 
\begin{eqnarray*}
\int {\rm Re} \left( c \overline{z} \varphi_{1} \right) d \mu_{(p_{1}, p_{2})}
& & \quad\quad\begin{cases}
= 0 \quad & \text{if} \quad p_{1} = p_{2} = \frac{1}{2}, \\ 
\to - c_{\mathbb{R}} \quad & \text{as}\ \quad p_{i} \uparrow 1. 
\end{cases} \\
-\int {\rm Re} \left( c^{2} \overline{z} \varphi_{2} \right) d \mu_{(p_{1}, p_{2})} 
& & \quad\quad\begin{cases}
= 0 \quad & \text{if} \quad p_{1} = p_{2} = \frac{1}{2}, \\ 
\to (c_{\mathbb{R}}^{2} - c_{\mathbb{I}}^{2}) \quad & \text{as}\ \quad p_{i} \uparrow 1; 
\end{cases} \\
\frac{1}{2} \int \left( {\rm Re} \left( c \overline{z} \varphi_{1} \right) \right)^{2} d \mu_{(p_{1}, p_{2})} 
& & \quad\quad\begin{cases}
= 0 \quad & \text{if} \quad p_{1} = p_{2} = \frac{1}{2}, \\ 
\to \frac{1}{2}c_{\mathbb{R}}^{2} \quad & \text{as}\ \quad p_{i} \uparrow 1; 
\end{cases} 
\end{eqnarray*} 
\begin{eqnarray*}
 -\ \frac{1}{2} \int \left( {\rm Im} \left( c \overline{z} \varphi_{1} \right) \right)^{2} d \mu_{(p_{1}, p_{2})}  
& & \quad\begin{cases}
= 0 \quad & \text{if} \quad p_{1} = p_{2} = \frac{1}{2}, \\ 
\to - \frac{1}{2} c_{\mathbb{I}}^{2} \quad & \text{as}\ \quad p_{i} \uparrow 1.  
\end{cases}
\end{eqnarray*} 

\noindent 
By considering upto coefficients of $c^{2}$ in equation \eqref{second|c|<1} and making use of the evaluations above, we infer that
\begin{eqnarray} 
\label{complexmatrix2}
 \mathcal{L}_{\mu_{(p_{1}, p_{2})}} (P_{c})  & = & - \log 2 - \int \log |\Phi_{c} (z)| d \mu_{(p_{1}, p_{2})} \nonumber \\ 
& &\begin{cases}
= - \log 2 \quad & \text{if} \quad p_{1} = p_{2} = \frac{1}{2}, \\ 
\to - \log 2 + c_{ \mathbb{R} } + \frac{3}{2} c_{ \mathbb{R} }^{2} - \frac{3}{2} c_{ \mathbb{I} }^{2} .  \quad & \text{as} \quad p_{1} \uparrow 1\ \text{or}\ p_{2} \uparrow 1.
\end{cases}
\end{eqnarray} 

\subsection{Cubic Polynomials} 
\label{cubicpolynomials}

\noindent 
In this section, we consider the monic, centered, cubic polynomial map $P_{\mathbf{a}}$ parametrised by the complex constants $a_{1}$ and $a_{0}$, given by 
\[ P_{\mathbf{a}} (z)\ \ \equiv\ \ P_{(a_{1}, a_{0})} (z)\ \ =\ \ z^{3} + a_{1} z + a_{0}\ \ \in\ \ \mathcal{Q}',\] 
restricted on its Julia set, denoted by $\mathcal{J}_{P_{\mathbf{a}}} \equiv \mathcal{J}_{\mathbf{a}}$. This form of the cubic polynomial easily identifies the critical points of the polynomial $P_{\mathbf{a}}$. As mentioned in section \eqref{setting}, we will impose a technical condition on the complex coefficients, $a_{1}$ and $a_{0}$ of $P_{\mathbf{a}}$ that $|a_{1}| < 1,\ |a_{0}| < 1$. An imminently stricter condition may also be required to make the corresponding Julia set hyperbolic, see \cite{Alexander, Alexander Blokh}. 
\medskip 

\noindent 
Observe that equation \eqref{Conjugacy} now looks like 
\begin{equation} 
\label{Conjugacy-cubic}
P_{\mathbf{a}} \circ \Phi_{\mathbf{a}} \circ \Psi\ \ =\ \ \Phi_{\mathbf{a}} \circ P_{\mathbf{0}} \circ \Psi\ \ =\ \ \Phi_{\mathbf{a}} \circ \Psi \circ \sigma. 
\end{equation}
Here $P_{\mathbf{0}} (z) \equiv R(z) = z^{3}$ and $\Phi_{\mathbf{a}} = \Phi_{P_{\mathbf{a}}}$. 
\medskip 

\noindent 
\begin{theorem} \cite{CarlesonGamelin}
\label{Carleson,gamelin3}
The map $\mathbf{a} \longmapsto \Phi_{\mathbf{a}}$ is an analytic map whenever $\mathcal{J}_{\mathbf{a}}$ is hyperbolic.
\end{theorem}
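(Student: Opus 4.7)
My plan is to imitate verbatim the argument given for Theorem 4.1 (the quadratic case), replacing the degree $2$ with degree $3$ and the single complex parameter $c$ by the pair $\mathbf{a} = (a_{1}, a_{0})$. The goal is to realise $\Phi_{\mathbf{a}}$ as the uniform limit of a sequence of maps that are manifestly analytic in $\mathbf{a}$, and then to invoke Weierstrass's convergence theorem to transfer analyticity to the limit.

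Concretely, I would define the $n$-th approximant
\[
(\Phi_{\mathbf{a}})_{n}(z)\ :=\ \bigl(P_{\mathbf{a}}^{n}(z)\bigr)^{3^{-n}},
\]
choosing the branch of the $3^{n}$-th root so that $(\Phi_{\mathbf{a}})_{n}(z) = z + O(1/z)$ for large $|z|$. This is possible because $P_{\mathbf{a}}$ is monic, so $P_{\mathbf{a}}^{n}(z) = z^{3^{n}}(1 + O(1/z))$ near infinity. From the trivial identity $P_{\mathbf{a}}^{n} = P_{\mathbf{a}}^{n-1} \circ P_{\mathbf{a}}$, taking $3^{-(n-1)}$-th powers of both sides immediately yields the functional recurrence
\[
\bigl[(\Phi_{\mathbf{a}})_{n}(z)\bigr]^{3}\ =\ (\Phi_{\mathbf{a}})_{n-1}\bigl(P_{\mathbf{a}}(z)\bigr),
\]
which is the exact cubic analogue of equation \eqref{Functionalequationford=2}. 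Passing to the uniform limit $\Phi_{\mathbf{a}}$ in this recurrence then produces equation \eqref{Conjugacy-cubic}.

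The crucial step, and the one I expect to be the main obstacle, is showing that the sequence $(\Phi_{\mathbf{a}})_{n}$ converges uniformly on compact subsets of the basin of infinity, locally uniformly in $\mathbf{a}$. This rests on hyperbolicity of $\mathcal{J}_{\mathbf{a}}$: the uniform expansion rate of $P_{\mathbf{a}}$ on the Julia set controls the successive ratios $(\Phi_{\mathbf{a}})_{n+1} / (\Phi_{\mathbf{a}})_{n}$, forcing them to be exponentially close to $1$ and hence producing a convergent telescoping product. Because hyperbolicity is an open condition that persists under small perturbations of $\mathbf{a}$, the convergence is locally uniform on the hyperbolic locus in parameter space.

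Finally, since $\mathbf{a} \longmapsto P_{\mathbf{a}}$ is a polynomial in $(a_{1}, a_{0})$, each approximant $(\Phi_{\mathbf{a}})_{n}$ is analytic in $\mathbf{a}$ throughout the hyperbolic locus, and local uniform convergence transfers analyticity to the limit $\Phi_{\mathbf{a}}$. The detailed execution of the convergence argument for a general monic polynomial of degree $d$ is carried out in \cite{CarlesonGamelin}, which is why the full proof is deferred to that reference.
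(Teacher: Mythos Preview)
Your proposal is correct and rests on the same B\"ottcher-coordinate construction as the paper: define the approximants $(\Phi_{\mathbf{a}})_{n}(z) = (P_{\mathbf{a}}^{n}(z))^{3^{-n}}$, verify the cubic functional recurrence, and pass to the uniform limit. The only difference is in how the two-parameter dependence is handled. The paper's proof is terser: it fixes $a_{0}$ and invokes the quadratic argument to get analyticity in $a_{1}$, then fixes $a_{1}$ and does the same for $a_{0}$, obtaining separate analyticity in each coordinate (joint analyticity then follows implicitly by Hartogs' theorem). You instead argue directly for locally uniform convergence in the joint parameter $\mathbf{a}=(a_{1},a_{0})$, which yields joint analyticity in one stroke without appealing to Hartogs. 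Both routes are standard; yours is slightly more self-contained, while the paper's is shorter because it simply recycles the one-variable proof twice.
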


\noindent 
\begin{proof} 
For a fixed $z$ and $a_{0}$, consider the map $a_{1} \longmapsto \Phi_{\mathbf{a}}$. Then, going through the same ideas as in the proof of theorem \eqref{Carleson,gamelin, Zinsmeister}, we infer that $a_{1} \longmapsto \Phi_{\mathbf{a}}$ is analytic, whenever $\mathcal{J}_{\mathbf{a}}$ is hyperbolic. Similarly, for a fixed $z$ and $a_{1}$, we obtain the map $a_{0} \longmapsto \Phi_{\mathbf{a}}$ is analytic too. 
\end{proof} 
\medskip 
 
\noindent
By our choice of the polynomial $P_{\mathbf{a}} \in \mathcal{Q}'$ and theorem \eqref{Carleson,gamelin3}, it is clear that the conjugacy map $\Phi_{\mathbf{a}}$ is analytic in both the parameters $a_{1}$ and $a_{0}$. Therefore, it is reasonable to consider 
\[ \Phi_{\mathbf{a}} (z)\ \ =\ \ z + \sum_{i + j \geq 1} \varphi_{(i, j)} (z) a_{1}^{i} a_{0}^{j}. \]
Then, the appropriate sides in equation \eqref{Conjugacy-cubic} gives 
\begin{eqnarray} 
\label{coefficient3}
\left( z^{3} + \sum_{i + j \geq 1} \varphi_{(i, j)} (z^{3}) a_{1}^{i} a_{0}^{j} \right)\ -\ \left( z + \sum_{i + j \geq 1} \varphi_{(i, j)} (z) a_{1}^{i}a_{0}^{j} \right)^{3} & & \nonumber \\ 
- a_{1} \left( z +\sum_{i + j \geq 1} \varphi_{(i, j)} (z) a_{1}^{i} a_{0}^{j} \right) - a_{0} & = & 0.
\end{eqnarray}
As earlier, a comparison of coefficients in \eqref{coefficient3} gives the necessary functions. Since we are only interested in terms upto quadratic order in the Lyapunov exponent and would accumulate terms of order three or more in $O(a_{1}^{3}, a_{0}^{3})$, we observe that it is sufficient for us to obtain a representation of the functions, $\varphi_{(1, 0)},\ \varphi_{(0, 1)},\ \varphi_{(1, 1)},\ \varphi_{(2, 0)},\ \varphi_{(0, 2)},\ \varphi_{(2, 1)},\ \varphi_{(1, 2)}$ and $\varphi_{(2, 2)}$. In fact, we obtain these functions explicitly as a series (as earlier), though we do not write them here, since the writing of the same could get extremely messy. Since we deal with the family of cubic polynomials here, it is only but natural that the Bernoulli measure is now considered with three parameters. Therefore, the Lyapunov exponent of the polynomial map $P_{\mathbf{a}}$ with respect to the $P_{\mathbf{a}}$-invariant measure $\mu_{(p_{1}, p_{2}, p_{3})}$  from equation \eqref{L} is 
\begin{equation} 
\label{cubicL} 
\mathcal{L}_{\mu_{(p_{1}, p_{2}, p_{3})}} \left(P_{\mathbf{a}}\right)\ \ =\ \ - \int_{\mathcal{J}_{\mathbf{a}}} \log |3 z^{2} + a_{1} | d \mu_{(p_{1}, p_{2}, p_{3})}. 
\end{equation} 
However, we know that in order to compute the dependence of the integral mentioned in the right hand side of equation \eqref{cubicL} on the coefficients $a_{1}$ and $a_{0}$, it is sufficient for us to compute the integral $-\int_{\mathit{S}^1} \log | \Phi_{\mathbf{a}} | d \mu_{(p_{1}, p_{2}, p_{3})}$. 
\begin{eqnarray} 
\label{cubiclyapcong}
& & - \int_{\mathit{S}^{1}} \log | \Phi_{\mathbf{a}} | d \mu_{(p_{1}, p_{2}, p_{3})} \\ 
\label{a1} 
& = & - \int_{\mathit{S}^{1}} \left[ {\rm Re} \left( a_{1} \overline{z} \varphi_{(1, 0)} \right) \right] d \mu_{(p_{1}, p_{2}, p_{3})} \\ 
\label{a0} 
& &  - \int_{\mathit{S}^{1}} \left[ {\rm Re} \left( a_{0} \overline{z} \varphi_{(0, 1)} \right) \right] d \mu_{(p_{1}, p_{2}, p_{3})} \\ 
& & - \int_{\mathit{S}^{1}} \Big[ \ {\rm Re} \left( a_{1} a_{0} \overline{z} \varphi_{(1, 1)} \right) - \left\{ {\rm Re}\left( \bar{z} \varphi_{(1,0)} a_{1} \right) \times {\rm Re} \left( \bar{z} \varphi_{(0,1)} a_{0} \right) \right\} \nonumber \\
\label{a1a0} 
& &\ \ \ \ \ \ \ +\left\{ {\rm Im}\left( \bar{z} \varphi_{(1,0)} a_{1} \right) \times {\rm Im} \left( \bar{z} \varphi_{(0,1)} a_{0} \right) \right\} \Big] d \mu_{(p_{1}, p_{2}, p_{3})} \\ 
& & \ \ \ \ \ \ \ \  \ \ \ \ \ \ \ \ \ \ \ \ \ \ \text{{\footnotesize (accounting for the terms with linear powers namely $a_{1},\ a_{0}$ and $a_{1} a_{0}$)}} \nonumber 
\end{eqnarray} 
\begin{eqnarray} 
& & - \int_{\mathit{S}^{1}} \left[ {\rm Re} \left( a_{1}^{2} \overline{z} \varphi_{(2, 0)} \right) - \frac{1}{2} \left\{ {\rm Re} \left( a_{1} \overline{z} \varphi_{(1, 0)} \right) \right\}^{2}+ \frac{1}{2} \left\{ {\rm Im} \left( a_{1} \overline{z} \varphi_{(1, 0)} \right) \right\}^{2} \right] d \mu_{(p_{1}, p_{2}, p_{3})} \nonumber \\ 
\label{a1sq} 
& & \ \ \ \ \ \ \ \  \ \ \ \ \ \ \ \ \ \ \ \ \ \ \ \ \ \ \ \ \ \ \ \ \ \ \ \ \ \ \ \ \ \ \ \ \ \ \ \ \ \ \ \ \ \ \ \ \ \ \ \ \ \ \ \ \ \ \ \ \ \ \ \text{{\footnotesize (terms involving $a_{1}^{2}$)}} \\
& & - \int_{\mathit{S}^{1}} \left[ {\rm Re} \left( a_{0}^{2} \overline{z} \varphi_{(0, 2)} \right) - \frac{1}{2} \left\{ {\rm Re} \left( a_{0} \overline{z} \varphi_{(0, 1)} \right) \right\}^{2} + \frac{1}{2} \left\{ {\rm Im} \left( a_{0} \overline{z} \varphi_{(0, 1)} \right) \right\}^{2} \right] d \mu_{(p_{1}, p_{2}, p_{3})} \nonumber \\ 
\label{a0sq} 
& & \ \ \ \ \ \ \ \  \ \ \ \ \ \ \ \ \ \ \ \ \ \ \ \ \ \ \ \ \ \ \ \ \ \ \ \ \ \ \ \ \ \ \ \ \ \ \ \ \ \ \ \ \ \ \ \ \ \ \ \ \ \ \ \ \ \ \ \ \ \ \ \text{{\footnotesize (terms involving $a_{0}^{2}$)}} 
\end{eqnarray} 
\begin{eqnarray} 
& & - \int_{\mathit{S}^{1}} \bigg[ {\rm Re} \left( a_{1}^{2} a_{0} \overline{z} \varphi_{(2, 1)} \right) \\ 
& &\ \ \ \ \ \ \ \ - \left\{ {\rm Re} \left( a_{1} \overline{z} \varphi_{(1, 0)} \right) \times {\rm Re} \left( a_{1} a_{0} \overline{z} \varphi_{(1, 1)} \right) \right\} + \left\{ {\rm Im} \left( a_{0} \overline{z} \varphi_{(0, 1)} \right) \times {\rm Im} \left( a_{1}^{2} \overline{z} \varphi_{(2, 0)} \right) \right\} \nonumber\\
& &\ \ \ \ \ \ \ \ +\left\{ {\rm Im} \left( a_{1} \overline{z} \varphi_{(1, 0)} \right) \times {\rm Im} \left( a_{1} a_{0} \overline{z} \varphi_{(1, 1)} \right) \right\} - \left\{ {\rm Re} \left( a_{0} \overline{z} \varphi_{(0, 1)} \right) \times {\rm Re} \left( a_{1}^{2} \overline{z} \varphi_{(2, 0)} \right) \right\} \nonumber \\ 
& & \ \ \ \ \ \ \ \ + \left\{ \left\{ {\rm Re} \left( a_{1} \overline{z} \varphi_{(1, 0)} \right) \right\}^{2} \times {\rm Re} \left( a_{0} \overline{z} \varphi_{(0, 1)} \right) \right\} - \left\{ {\rm Re} \left( a_{0} \overline{z} \varphi_{(0, 1)} \right) \times \left( {\rm Im} \left( a_{1}  \overline{z} \varphi_{(1, 0)} \right) \right)^{2}\right\}\nonumber\\
& & \ \ \ \ \ \ \ \ - 2 \left\{ {\rm Re} \left( a_{1} \overline{z} \varphi_{(1,0)} \right) \times {\rm Im} \left( a_{0} \overline{z} \varphi_{(0, 1)} \right) \times {\rm Im} \left( a_{1} \overline{z} \varphi_{(1, 0)} \right) \right\}\bigg] d \mu_{(p_{1}, p_{2}, p_{3})} \nonumber \\ 
\label{a1sqa0} 
& & \ \ \ \ \ \ \ \  \ \ \ \ \ \ \ \ \ \ \ \ \ \ \ \ \ \ \ \ \ \ \ \ \ \ \ \ \ \ \ \ \ \ \ \ \ \ \ \ \ \ \ \ \ \ \ \ \ \ \ \ \ \ \ \ \ \ \ \ \text{{\footnotesize (terms involving $a_{1}^{2} a_{0}$)}} 
\end{eqnarray} 
\begin{eqnarray} 
& & - \int_{\mathit{S}^{1}} \bigg[ {\rm Re} \left( a_{1}a_{0}^2 \overline{z} \varphi_{(1, 2)} \right) \\ 
& &\ \ \ \ \ \ \ \ - \left\{ {\rm Re} \left( a_{1} \overline{z} \varphi_{(1, 0)} \right) \times \left( {\rm Im} \left( a_{0}  \overline{z} \varphi_{(0, 1)} \right) \right)^{2} \right\} - \left\{ {\rm Re}\left( a_{1} \overline{z} \varphi_{(1, 0)} \right) \times {\rm Re} \left( a_{0}^{2} \overline{z} \varphi_{(0, 2)} \right) \right\}\nonumber \\
& & \ \ \ \ \ \ \ \ - \left\{ {\rm Re} \left( a_{0} \overline{z} \varphi_{(0, 1)} \right) \times {\rm Re} \left( a_{1} a_{0} \overline{z} \varphi_{(1, 1)} \right) \right\} +\left\{ {\rm Im} \left( a_{0} \overline{z} \varphi_{(0, 1)} \right) \times {\rm Im} \left( a_{1} a_{0} \overline{z} \varphi_{(1, 1)} \right) \right\}\nonumber\\
& & \ \ \ \ \ \ \ \ + \left\{ \left\{ {\rm Re} \left( a_{0} \overline{z} \varphi_{(0, 1)} \right)\right\}^{2} \times {\rm Re} \left( a_{1} \overline{z} \varphi_{(1, 0)} \right) \right\} + \left\{ {\rm Im}\left( a_{1} \overline{z} \varphi_{(1, 0)} \right) \times {\rm Im} \left( a_{0}^{2} \overline{z} \varphi_{(0, 2)} \right) \right\} \nonumber\\
 & & \ \ \ \ \ \ \ \ - 2 \left\{ {\rm Re} \left( a_{0} \overline{z} \varphi_{(0,1)} \right) \times {\rm Im} \left( a_{1} \overline{z} \varphi_{(1, 0)} \right) \times {\rm Im} \left( a_{0} \overline{z} \varphi_{(0, 1)} \right) \right\} \bigg] d \mu_{(p_{1}, p_{2}, p_{3})} \nonumber \\ 
\label{a1a0sq} 
& & \ \ \ \ \ \ \ \  \ \ \ \ \ \ \ \ \ \ \ \ \ \ \ \ \ \ \ \ \ \ \ \ \ \ \ \ \ \ \ \ \ \ \ \ \ \ \ \ \ \ \ \ \ \ \ \ \ \ \ \ \ \ \ \ \ \ \ \ \ \ \text{{\footnotesize (terms involving $a_{1} a_{0}^{2}$)}} 
\end{eqnarray} 
\begin{eqnarray} 
& & - \int_{\mathit{S}^{1}} \bigg[ {\rm Re} \left( a_{1}^{2} a_{0}^{2} \overline{z} \varphi_{(2, 2)} \right) -  \frac{1}{2} \left\{ {\rm Re} \left( a_{1} a_{0} \overline{z} \varphi_{(1, 1)} \right) \right\}^{2}+\frac{1}{2} \left\{ {\rm Im} \left( a_{1} a_{0} \overline{z} \varphi_{(1, 1)} \right) \right\}^{2} \nonumber \\ 
& & \ \ \ \ \ \ \ \ - \left\{ {\rm Re} \left( a_{1} \overline{z} \varphi_{(1, 0)} \right) \times {\rm Re} \left( a_{1} a_{0}^{2} \overline{z} \varphi_{(1, 2)} \right) \right\} +\left\{ {\rm Im} \left( a_{1} \overline{z} \varphi_{(1, 0)} \right) \times {\rm Im} \left( a_{1} a_{0}^{2} \overline{z} \varphi_{(1, 2)} \right) \right\}\nonumber\\
& & \ \ \ \ \ \ \ \ - \left\{ {\rm Re} \left( a_{0} \overline{z} \varphi_{(0, 1)} \right) \times {\rm Re} \left( a_{1}^{2} a_{0} \overline{z} \varphi_{(2, 1)} \right) \right\} +\left\{ {\rm Im} \left( a_{0} \overline{z} \varphi_{(0, 1)} \right) \times {\rm Im} \left( a_{1}^{2} a_{0} \overline{z} \varphi_{(2, 1)} \right) \right\}\nonumber\\
& & \ \ \ \ \ \ \ \ - \left\{ {\rm Re} \left( a_{1}^{2} \overline{z} \varphi_{(2, 0)} \right) \times {\rm Re} \left( a_{0}^{2} \overline{z} \varphi_{(0, 2)} \right) \right\} +\left\{ {\rm Im} \left( a_{1}^{2} \overline{z} \varphi_{(2, 0)} \right) \times {\rm Im} \left( a_{0}^{2} \overline{z} \varphi_{(0, 2)} \right) \right\}\nonumber\\
& & \ \ \ \ \ \ \ \ + \left\{ \left\{ {\rm Re} \left( a_{1} \overline{z} \varphi_{(1, 0)} \right) \right\}^{2} \times {\rm Re} \left( a_{0}^{2} \overline{z} \varphi_{(0, 2)} \right) \right\} + \left\{ \left\{ {\rm Re} \left( a_{0} \overline{z} \varphi_{(0, 1)} \right) \right\}^{2} \times {\rm Re} \left( a_{1}^{2} \overline{z} \varphi_{(2, 0)} \right) \right\} \nonumber \\ 
& & \ \ \ \ \ \ \ \ + 2 \left\{ {\rm Re} \left( a_{1} \overline{z} \varphi_{(1,0)} \right) \times {\rm Re} \left( a_{0} \overline{z} \varphi_{(0, 1)} \right) \times {\rm Re} \left( a_{1} a_{0} \overline{z} \varphi_{(1, 1)} \right) \right\} \nonumber \\
& & \ \ \ \ \ \ \ \ -2\left\{\left\{ {\rm Re} \left( a_{1} \overline{z} \varphi_{(1, 0)}\right)\right\} \times \left\{ {\rm Im} \left( a_{1} \overline{z} \varphi_{(1,0)}\right)\right\} \times  \left\{ {\rm Im} \left( a_{0}^{2} \overline{z} \varphi_{(0,2)}\right)\right\} \right\}\nonumber\\
& & \ \ \ \ \ \ \ \ -2\left\{\left\{ {\rm Re} \left( a_{1} \overline{z} \varphi_{(1,0)}\right)\right\} \times \left\{ {\rm Im} \left( a_{0} \overline{z} \varphi_{(0,1)}\right)\right\} \times  \left\{ {\rm Im} \left( a_{1}a_{0} \overline{z} \varphi_{(1,1)}\right)\right\} \right\}\nonumber\\
& & \ \ \ \ \ \ \ \ -2\left\{\left\{ {\rm Re} \left( a_{0} \overline{z} \varphi_{(0,1)}\right)\right\} \times \left\{ {\rm Im} \left( a_{1} \overline{z} \varphi_{(1,0)}\right)\right\} \times  \left\{ {\rm Im} \left( a_{1}a_{0} \overline{z} \varphi_{(1,1)}\right)\right\} \right\}\nonumber\\
& & \ \ \ \ \ \ \ \ -2\left\{\left\{ {\rm Re} \left( a_{0} \overline{z} \varphi_{(0,1)}\right)\right\} \times \left\{ {\rm Im} \left( a_{0} \overline{z} \varphi_{(0,1)}\right)\right\} \times  \left\{ {\rm Im} \left( a_{1}^{2} \overline{z} \varphi_{(2,0)}\right)\right\} \right\}\nonumber\\
& & \ \ \ \ \ \ \ \ -2\left\{\left\{ {\rm Re} \left( a_{1}a_{0} \overline{z} \varphi_{(1,1)}\right)\right\} \times \left\{ {\rm Im} \left( a_{1} \overline{z} \varphi_{(1,0)}\right)\right\} \times  \left\{ {\rm Im} \left( a_{0} \overline{z} \varphi_{(0,1)}\right)\right\} \right\}\nonumber\\
& & \ \ \ \ \ \ \ \ -\left\{\left\{ {\rm Re} \left( a_{1}^{2} \overline{z} \varphi_{(2,0)}\right)\right\} \times \left\{ {\rm Im} \left( a_{0} \overline{z} \varphi_{(0,1)}\right)\right\}^2 + \left\{ {\rm Re} \left( a_{0}^{2} \overline{z} \varphi_{(0,2)}\right)\right\} \times \left\{ {\rm Im} \left( a_{1} \overline{z} \varphi_{(1,0)}\right)\right\}^2\right\}\nonumber\\
& & \ \ \ \ \ \ \ \ -\frac{3}{2}\left\{ \left\{ {\rm Re} \left( a_{1} \overline{z} \varphi_{(1, 0)} \right) \right\}^{2} \times \left\{ {\rm Re} \left( a_{0} \overline{z} \varphi_{(0, 1)} \right) \right\}^{2} + \left\{ {\rm Im} \left( a_{1} \overline{z} \varphi_{(1, 0)} \right) \right\}^{2} \times \left\{ {\rm Im} \left( a_{0} \overline{z} \varphi_{(0, 1)} \right) \right\}^{2}\right\}\nonumber\\
& &\ \ \ \ \ \ \ \ +\frac{3}{2}\left\{ \left\{ {\rm Re} \left( a_{1} \overline{z} \varphi_{(1, 0)} \right) \right\}^{2} \times \left\{ {\rm Im} \left( a_{0} \overline{z} \varphi_{(0, 1)} \right) \right\}^{2} + \left\{ {\rm Re} \left( a_{0} \overline{z} \varphi_{(0, 1)} \right) \right\}^{2} \times \left\{ {\rm Im} \left( a_{1} \overline{z} \varphi_{(1, 0)} \right) \right\}^{2}\right\} \nonumber \\
& &\ \ \ \ \ \ \ \ +6 \left\{ {\rm Re} \left( a_{1} \overline{z} \varphi_{(1,0)} \right) \times {\rm Re} \left( a_{0} \overline{z} \varphi_{(0, 1)} \right) \times {\rm Im} \left( a_{1} \overline{z} \varphi_{(1,0)} \right) \times {\rm Im} \left( a_{0} \overline{z} \varphi_{(0, 1)} \right)\right\} \bigg] d \mu_{(p_{1}, p_{2}, p_{3})} \nonumber \\ 
\label{a1sqa0sq} 
& & \ \ \ \ \ \ \ \  \ \ \ \ \ \ \ \ \ \ \ \ \ \ \ \ \ \ \ \ \ \ \ \ \ \ \ \ \ \ \ \ \ \ \ \ \ \ \ \ \ \ \ \ \ \ \ \ \ \ \ \ \ \ \ \ \ \ \ \ \ \ \ \ \ \ \ \ \ \text{{\footnotesize (terms involving $a_{1}^{2} a_{0}^{2}$)}} \\ 
& & +\ O(a_{1}^{3}, a_{0}^{3}).\nonumber
\end{eqnarray} 

\subsubsection*{Case - 1 : $\mathbf{a} \in \mathbb{R}^{2}$ satisfying $P_{\mathbf{a}} \in \mathcal{Q}'$} 

\noindent 
We now estimate the right hand side of equation \eqref{cubiclyapcong} by considering the real vector $\mathbf{a}$ such that the cubic polynomial $P_{\mathbf{a}} \in \mathcal{Q}'$. We achieve this by estimating each of the integrals in \eqref{a1}, \eqref{a0},\eqref{a1a0}, \eqref{a1sq}, \eqref{a0sq},\eqref{a1sqa0}, \eqref{a1a0sq} and \eqref{a1sqa0sq} separately with $ a_{j} \equiv a_{j, \mathbb{R} }$ for $ j = 1, 0 $. We consider the values of every term in the equilibrium state where $p_{1} = p_{2} = p_{3} = \frac{1}{3}$ and when either of the $p_{i}$'s approach $1$. We infer that 
\begin{eqnarray}\label{realmatrix3} 
 \mathcal{L}_{\mu_{(p_{1}, p_{2}, p_{3})}} (P_\mathbf{a})  & = & -\log 3 - \int \log | \Phi_\mathbf{a} (z) | d \mu_{(p_{1}, p_{2}, p_{3})}  \nonumber\\
& &\begin{cases}
= & - \log 3 \quad \text{if} \quad p_{1} = p_{2} = p_{3}, \\ 
\to & - \log 3 + \frac{1}{2} a_{ 1} + \frac{1}{2} a_{ 0 } + \frac{3}{4} a_{ 1 } a_{ 0 } + \frac{1}{4} a_{ 1}^{2} + \frac{1}{2} a_{ 0}^{2}\\
& + \frac{15}{16} a_{ 1}^{2}a_{ 0 } + \frac{3}{2}a_{ 1}a_{ 0 }^{2}
 + 3a_{ 1}^{2} a_{ 0}^{2}  \quad\quad\quad\quad\quad\quad\quad \text{as} \quad p_{i} \uparrow 1.
\end{cases}
\end{eqnarray} 

\subsubsection*{Case - 2: $\mathbf{a} \in \mathbb{C}^{2}$ satisfying $P_{\mathbf{a}} \in \mathcal{Q}'$} 

\noindent 
In this case, we undertake the same calculations as in case - 1, however considering $\mathbf{a}$ to be a vector over the complex field. By considering terms upto coefficients of $a_{1}^{2}$ and $a_{0}^{2}$ in the right hand side of the equation \eqref{cubiclyapcong} and evaluating the integrals as written in  \eqref{a1}, \eqref{a0},\eqref{a1a0}, \eqref{a1sq}, \eqref{a0sq},\eqref{a1sqa0}, \eqref{a1a0sq} and \eqref{a1sqa0sq}, we infer that
\begin{eqnarray}\label{realmatrix3} 
 \mathcal{L}_{\mu_{(p_{1}, p_{2}, p_{3})}} (P_\mathbf{a})  & = & - \log 3 - \int \log | \Phi_\mathbf{a} (z) | d \mu_{(p_{1}, p_{2}, p_{3})}  \nonumber\\
& &\begin{cases}
= & - \log 3 \quad \text{if} \quad p_{1} = p_{2} = p_{3}, \\ 
\to & - \log 3 + \frac{1}{2} a_{ 1, \mathbb{R} } + \frac{1}{2} a_{ 0, \mathbb{R} } + \frac{3}{4} a_{ 1, \mathbb{R} } a_{ 0, \mathbb{R} } - \frac{3}{4} a_{ 1, \mathbb{I} } a_{ 0, \mathbb{I} } + \frac{1}{4} a_{ 1, \mathbb{R} }^{2} \\
& - \frac{1}{4} a_{ 1, \mathbb{I} }^{2} + \frac{1}{2} a_{ 0, \mathbb{R} }^{2} - \frac{1}{2} a_{ 0, \mathbb{I} }^{2} + \frac{15}{16} a_{ 1, \mathbb{R} }^{2}a_{ 0, \mathbb{R} } - \frac{15}{16} a_{ 1, \mathbb{I} }^{2}a_{ 0, \mathbb{R} } + \frac{3}{2}a_{ 1, \mathbb{R}}a_{ 0, \mathbb{R} }^{2} \\ 
& -  \frac{3}{2}a_{ 1, \mathbb{R} }a_{ 0, \mathbb{I} }^{2} -  \frac{15}{8}a_{ 1, \mathbb{R} }a_{ 1, \mathbb{I} }a_{ 0, \mathbb{I} } -3a_{ 1, \mathbb{I} }a_{ 0, \mathbb{R} }a_{ 0, \mathbb{I} } - 12a_{ 1, \mathbb{R} }a_{ 1, \mathbb{I} }a_{ 0, \mathbb{R} }a_{ 0, \mathbb{I} } \\ & + 3a_{ 1, \mathbb{R} }^{2} a_{ 0, \mathbb{R} }^{2} -3a_{ 1, \mathbb{R} }^{2} a_{ 0, \mathbb{I} }^{2} - 3a_{ 1, \mathbb{I} }^{2} a_{ 0, \mathbb{R} }^{2} + 3a_{ 1, \mathbb{I} }^{2} a_{ 0, \mathbb{I} }^{2} \quad \text{as} \quad p_{i} \uparrow 1.
\end{cases} \nonumber \\ 
& & 
\end{eqnarray} 

\section{Concluding Observations} 
\label{concl}

\noindent 
We conclude this paper with a few observations.
\begin{enumerate} 
\item \begin{itemize} 
\item In the case of the quadratic polynomial, the first and second order total derivative with respect to $c$ of the Lyapunov exponent when $c$ is real corresponds to the $(1, 1)$ entry of the appropriate matrix, when $c$ is complex, i.e., 
\[ \frac{d \mathcal{L}}{dc}\ \ =\ \ \frac{\partial \mathcal{L}}{\partial c_{\mathbb{R}}}\ \ \ \ \text{and}\ \ \ \ \frac{d^{2} \mathcal{L}}{dc^{2}}\ \ =\ \ \frac{\partial^{2} \mathcal{L}}{\partial c_{\mathbb{R}}^{2}}. \] 
Here, the left hand side corresponds to the case when $c$ is real while the right hand side corresponds to the case when $c$ is complex. 
\item Similarly, in the case of the cubic polynomial, we have  
\begin{eqnarray*} 
\frac{\partial \mathcal{L}}{\partial a_{1}}\ \ =\ \ \frac{\partial \mathcal{L}}{\partial a_{1, \mathbb{R}}}\bigg|_{a_{1, \mathbb{I}}\, =\, 0 }\bigg|_{a_{0, \mathbb{I}}\, =\, 0} & \text{and} & \frac{\partial \mathcal{L}}{\partial a_{0}}\ \ =\ \ \frac{\partial \mathcal{L}}{\partial a_{0, \mathbb{R}}}\bigg|_{a_{1, \mathbb{I}}\, =\, 0}\bigg|_{a_{0, \mathbb{I}}\, =\, 0}; \\ 
\frac{\partial^{2} \mathcal{L}}{\partial a_{1}^{2}}\ \ =\ \ \frac{\partial^{2} \mathcal{L}}{\partial a_{1, \mathbb{R}}^{2}}\bigg|_{a_{0, \mathbb{I}}\, =\, 0} & \text{and} & \frac{\partial^{2} \mathcal{L}}{\partial a_{0}^{2}}\ \ =\ \ \frac{\partial^{2} \mathcal{L}}{\partial a_{0, \mathbb{R}}^{2}}\bigg|_{a_{1, \mathbb{I}}\, =\, 0}. 
\end{eqnarray*} 
In all the equations above, the left hand side corresponds to the case when $a_{1}$ and $a_{0}$ are real while the right hand side corresponds to the case when $a_{1}$ and $a_{0}$ are complex. 
\end{itemize} 
\item \begin{itemize} 
\item When the parameter of the quadratic polynomial namely $c$ is complex, the second derivative of $\mathcal{L}$ with respect to the real part of $c$ and the second derivative of $\mathcal{L}$ with respect to the imaginary part of $c$ are the same in modulus; but different in sign, i.e., 
\[ \frac{\partial^{2} \mathcal{L}}{\partial c_{\mathbb{R}}^{2}}\ \ =\ \ - \frac{\partial^{2} \mathcal{L}}{\partial c_{\mathbb{I}}^{2}}. \]
\item The same property is true for the parameters of the cubic polynomial namely $a_{1}$ and $a_{0}$, when they are complex, \textit{i.e.}, 
\[ \frac{\partial^{2} \mathcal{L}}{\partial a_{1, \mathbb{R}}^{2}}\ \ =\ \ - \frac{\partial^{2} \mathcal{L}}{\partial a_{1, \mathbb{I}}^{2}}\bigg|_{a_{0, \mathbb{I}}\, =\, 0};\ \ \ \ \text{and}\ \ \ \ \frac{\partial^{2} \mathcal{L}}{\partial a_{0, \mathbb{R}}^{2}}\ \ =\ \ - \frac{\partial^{2} \mathcal{L}}{\partial a_{0, \mathbb{I}}^{2}}\bigg|_{a_{1, \mathbb{I}}\, =\, 0}. \]
\end{itemize} 
\item \begin{itemize} 
\item For a complex parameter $c$ under consideration, the determinant of the second differential operator (with respect to $c$) of the Lyapunov exponent in theorem \eqref{theorem1} turns out to be 
\begin{eqnarray}
{\rm det}\left[D_{c}^{2} \left( \mathcal{L}_{\mu_{(p_{1}, p_{2})}} (P_{c}) \right) \right] & &\begin{cases}
= 0 \quad & \text{if} \quad p_{1} = p_{2} = \frac{1}{2}, \\ 
\to -9 \quad & \text{as} \quad p_{1} \uparrow 1\ \text{or}\ p_{2} \uparrow 1. 
\end{cases}
\end{eqnarray} 
Define a function $g : \mathcal{J}_c \longrightarrow \mathbb{R}$ by
\[ (g \circ \Phi_{c} \circ \Psi) (\underline{x})\ \ :=\ \ 
\begin{cases}
\log p_{1}\ \quad \text{if}\ \quad x_{0} = 0, \\
\log p_{2}\ \quad \text{if}\ \quad x_{0} = 1;
\end{cases} \]
for $\underline{x} \in \Sigma_{2}^{+}$. Observe that $g$ is not only H\"{o}lder continuous, but in fact a locally constant function that only depends on the zeroth coordinate $x_{0}$ of the infinite vector $\underline{x} \in \Sigma_{2}^{+}$. Then we know from Walters \cite{Walter} that the measure $\mu_{(p_{1}, p_{2})}$ associated to the Bernoulli measure $\tilde{\mu}_{(p_{1}, p_{2})}$ is the equilibrium state assured by Denker and Urbanski in \cite{Denker} for the function $g$. Since $g$ is analytic in $p_{1}$, one may infer that the map $\Theta_{2} : p_{1} \longmapsto D_{c}^{2} \left[ \mathcal{L}_{\mu_{(p_{1}, p_{2})}} (P_{c}) \right]$ is analytic too. Observe that 
\begin{eqnarray*} 
\Theta_{2} (p_{1}) & \approx & \int \left[ {\rm Re}\left( c^{2} \overline{z} \varphi_{2} \right) - \frac{1}{2} \left( {\rm Re}\left(c \overline{z} \varphi_{1} \right) \right)^2 + \frac{1}{2}\left( {\rm Im}\left(c \overline{z} \varphi_{1} \right) \right)^2\right] d \mu_{(p_{1}, p_{2})}, 
\end{eqnarray*}
from equation \eqref{second|c|<1}. In other words, we have produced a non-zero analytic function that is approximately $-9$ when either $p_{1}$ or $p_{2}$ approaches $1$ by fixing $h (z) := {\rm Re}\left(c^{2} \overline{z} \varphi_{2} (z) \right) - \frac{1}{2} \left( {\rm Re}\left(c \overline{z} \varphi_{1} (z) \right) \right)^{2}+ \frac{1}{2} \left( {\rm Im }\left(c \overline{z} \varphi_{1} (z) \right) \right)^{2} $. Further, since we have $\Theta_{2}$ to be an analytic function due to corollary \eqref{Holder dimension}, we have proved that the Lyapunov exponent of the quadratic polynomial map $P_{c} \in \mathcal{Q}$ with $c \in \mathbb{R}$ can be zero or vary linearly in $c$ for only atmost finitely many measures $\mu_{(p_{1}, p_{2})}$ associated to the Bernoulli measure $\tilde{\mu}_{(p_{1}, p_{2})}$. 
\item Using same argument as in the case of the quadratic polynomial with a complex parameter, we conclude that by taking a complex vector $\mathbf{a}$ in theorem \eqref{theorem2}, we have 
\[ {\rm det} \left[ D^{2}_{\mathbf{a}} \left( - \int_{\mathcal{J}_{\mathbf{a}}} \log \left| P_{\mathbf{a}}' \right| d\mu_{(p_{1}, p_{2}, p_{3})} \right) \right]\ \ =\ \ 0, \]
only for finitely many $0 < p_{i} < 1$, for $1 \le i \le 3$. Here $D_{\mathbf{a}}$ is the differential operator with respect to $\mathbf{a} = (a_{1}, a_{0})$. 
\end{itemize} 
\end{enumerate}
\medskip 

\noindent 
The results due to Manning as in \cite{Manning} and Sridharan as in \cite{Sridharan} are now easily obtained as corollaries to our computations. We conclude the paper by writing these results.
\medskip 

\noindent 
\begin{corollary}[\cite{Manning}] 
The Lyapunov exponent of a hyperbolic, monic, centered, quadratic polynomial map $P_{c} (z) = z^{2} + c$ restricted on its Julia set $\mathcal{J}_{c}$ with respect to the Lyubich's measure $\mu$ is a constant, 
\[ \int_{\mathcal{J}_{c}} \log |P'_{c}| d \mu\ \ =\ \ \log 2. \] 
\end{corollary}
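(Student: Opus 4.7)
The plan is to obtain this corollary as an immediate specialization of Theorem \eqref{theorem1}. The key observation, already recorded in section \eqref{setting}, is that the equidistributed Bernoulli measure $\tilde{\mu}_{(1/2,\,1/2)}$ on $\Sigma_{2}^{+}$ corresponds via the conjugacy in \eqref{Conjugacy-quadratic} to Lyubich's measure $\mu$ on $\mathcal{J}_{c}$. That is, $\mu = \mu_{(1/2,\,1/2)}$ under the identification supplied by $\Phi_{c} \circ \Psi$.

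First I would invoke Theorem \eqref{theorem1} with the choice $p_{1} = p_{2} = \tfrac{1}{2}$. The first branch of the piecewise statement gives
\[ \mathcal{L}_{\mu_{(1/2,\,1/2)}}(P_{c})\ \ =\ \ -\log 2, \]
independent of $c$, as long as $P_{c} \in \mathcal{Q}$. Next I would simply unravel the definition \eqref{L} of the Lyapunov exponent, which carries a minus sign in front of the integral:
\[ -\int_{\mathcal{J}_{c}} \log |P_{c}'|\, d\mu\ \ =\ \ \mathcal{L}_{\mu}(P_{c})\ \ =\ \ -\log 2, \]
so that multiplying through by $-1$ yields $\int_{\mathcal{J}_{c}} \log|P_{c}'|\, d\mu = \log 2$, which is the claimed equality. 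The constancy in $c$ is automatic since the right-hand side carries no dependence on the parameter.

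There is essentially no obstacle here beyond correctly identifying $\mu$ as $\mu_{(1/2,\,1/2)}$; this identification is precisely the content of the remark following the definition of the Bernoulli measures, justified by the topological conjugacy of $P_{c}$ on $\mathcal{J}_{c}$ with $R(z) = z^{2}$ on $S^{1}$ and the fact that the Haar measure on $S^{1}$ pulls back to the symmetric Bernoulli measure on $\Sigma_{2}^{+}$. Thus the corollary follows with no further computation.
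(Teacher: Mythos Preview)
Your proposal is correct and matches the paper's approach: the corollary is stated there without an explicit proof, the authors simply noting that it is ``easily obtained'' from their computations, i.e., by specializing Theorem~\eqref{theorem1} to the equidistributed case $p_{1}=p_{2}=\tfrac{1}{2}$ and using the identification of $\mu_{(1/2,1/2)}$ with Lyubich's measure recorded in section~\eqref{setting}. Your write-up makes this specialization and the sign-unraveling explicit, which is exactly what is needed.
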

\medskip 

\noindent 
\begin{corollary}[\cite{Sridharan}]
Consider the hyperbolic, monic, centered, quadratic polynomial map $P_{c} (z) = z^{2} + c$ for $c \in [0, \frac{1}{4})$. Then, the second derivative (with respect to $c$) of the Lyapunov exponent  with respect to the measure $\mu_{(p_{1}, p_{2})}$ associated to the Bernoulli measure $\tilde{\mu}_{(p_{1}, p_{2})}$ can be zero for only finitely many $0 < p_{1} < 1$.
\end{corollary}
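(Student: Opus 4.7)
The plan is to promote the analyticity-plus-nonvanishing argument sketched at the end of Section~\ref{concl} into a complete proof, specialised to the real parameter $c \in [0, \tfrac{1}{4})$. First I would observe that, for such real $c$, Case 1 of Subsection 4.1 (namely equation \eqref{|c|<1} and the evaluations that follow it) expresses $\mathcal{L}_{\mu_{(p_{1}, p_{2})}}(P_{c})$ as a power series in $c$ whose coefficients are integrals against $\mu_{(p_{1}, p_{2})}$ of functions that are independent of $c$. Differentiating twice in $c$ reduces $\tfrac{d^{2}\mathcal{L}}{dc^{2}}$ to a function of $p_{1}$ alone (using $p_{2} = 1-p_{1}$), which I denote by $\Theta_{2}(p_{1})$.

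Next I would show that $\Theta_{2}$ is real analytic on $(0,1)$. Up to a fixed multiplicative constant, $\Theta_{2}(p_{1})$ is the integral of the fixed H\"older continuous function
\[ h(z) \;=\; \mathrm{Re}\bigl(\bar z\, \varphi_{2}(z)\bigr) - \tfrac{1}{2}\bigl(\mathrm{Re}(\bar z\, \varphi_{1}(z))\bigr)^{2} + \tfrac{1}{2}\bigl(\mathrm{Im}(\bar z\, \varphi_{1}(z))\bigr)^{2} \]
against $\mu_{(p_{1}, p_{2})}$. As noted in Section~\ref{concl}, the measure $\mu_{(p_{1}, p_{2})}$ is the unique equilibrium state associated via Denker--Urbanski to the locally constant (hence H\"older) function $g$, and the data defining $g$ depend analytically on $p_{1}$. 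Corollary~\ref{Holder dimension} then yields at once that $p_{1} \longmapsto \int h \, d\mu_{(p_{1}, p_{2})}$, and therefore $\Theta_{2}$, is real analytic on $(0,1)$.

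The third step is to verify that $\Theta_{2}$ does not vanish identically. From the limiting formula in Case 1 of Subsection 4.1, one reads $\mathcal{L}_{\mu_{(p_{1}, p_{2})}}(P_{c}) \to -\log 2 + c + \tfrac{3}{2} c^{2}$ as $p_{1} \uparrow 1$, so $\Theta_{2}(p_{1}) \to 3$. Interchanging the two symbols of the Bernoulli scheme delivers the same non-zero boundary limit as $p_{1} \downarrow 0$. Hence $\Theta_{2}$ is a non-trivial real analytic function on $(0,1)$ whose one-sided limits at both endpoints equal $3$. By the identity theorem for real analytic functions the zero set of $\Theta_{2}$ is discrete in $(0,1)$, and the non-vanishing boundary limits prevent any accumulation of zeros at $0$ or $1$; the zero set is therefore finite, as claimed.

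The argument contains no genuine obstacle, since every ingredient is already available: Corollary~\ref{Holder dimension} supplies the analyticity of $\Theta_{2}$, and the explicit expansion computed in Case 1 of Subsection 4.1 supplies the non-zero boundary value. The one point that needs a little care is the exclusion of zero-accumulation at the endpoints $p_{1} = 0, 1$, which is precisely where the explicit limit value $3$ is used.
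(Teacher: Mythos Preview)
Your proposal is correct and follows essentially the same approach as the paper: the paper's argument in Section~\ref{concl} (item 3) likewise defines $\Theta_{2}(p_{1})$ as the second-order coefficient integral, invokes Corollary~\ref{Holder dimension} (via the locally constant potential $g$) for real analyticity in $p_{1}$, and uses the explicit non-zero limiting value from the Case~1 computation to conclude finitely many zeros. Your write-up is slightly more careful than the paper in explicitly excluding accumulation of zeros at the endpoints $p_{1}\in\{0,1\}$ via the boundary limit $3$, a point the paper leaves implicit.
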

\medskip 

\noindent

\bigskip 

\noindent 
{\sc Sridharan, Shrihari} \\
Indian Institute of Science Education and Research Thiruvananthapuram (IISER-TVM) \\
Maruthamala P.O., Vithura, Thiruvananthapuram, INDIA. PIN 695 551. \\ 
{\tt shrihari@iisertvm.ac.in}  
\bigskip 

\noindent 
{\sc Tiwari, Atma Ram} \\ 
Indian Institute of Science Education and Research Thiruvananthapuram (IISER-TVM) \\
Maruthamala P.O., Vithura, Thiruvananthapuram, INDIA. PIN 695 551. \\ 
{\tt artiwari15@iisertvm.ac.in}
\end{document}